\date{}
\newtheorem{thm}{Theorem}[section]
\newtheorem{lem}[thm]{Lemma}
\newtheorem{prop}[thm]{Proposition}
\newtheorem{cor}[thm]{Corollary}
\theoremstyle{definition}
\newtheorem{ex}[thm]{\it Example}
\newtheorem{defin}[thm]{Definition}
\numberwithin{equation}{section}
\newcommand{\N}{\mathbb{N}}
\newcommand{\Z}{\mathbb{Z}}
\newcommand{\cont}{\subseteq}
\newcommand{\nin}{\not \in}
\newcommand{\K}{K}
\newcommand{\g}{\gamma}
\newcommand{\D}{\Delta}
\newcommand{\al}{\boldsymbol{\alpha}}
\newcommand{\be}{\boldsymbol{\beta}}
\newcommand{\ga}{\boldsymbol{\gamma}}
\newcommand{\de}{\boldsymbol{\delta}}
\newcommand{\bff}{\textbf}
\newcommand{\newbrace}[1][]{
\begin{tikzpicture}[baseline=-0.5ex]
\draw[#1] (0,0) -- (0.3,0.3);
\draw[#1] (0,0) -- (0.3,-0.3);
\end{tikzpicture}
}
\title{Almost symmetric good semigroups}
\author{L. Casabella\thanks{Dipartimento di Matematica "Tullio Levi-Civita", Universit\`a degli Studi di Padova, Italy; {\em email}: laura.casabella1@gmail.com} \and
M. D'Anna\thanks{Dipartimento di Matematica e Informatica, Universit\`a degli Studi di Catania, Italy; {\em email}: mdanna@dmi.unict.it - ORCID: 0000-0002-9943-5527}}
\begin{document}

\maketitle

\begin{abstract}
The class of good semigroups is a class of subsemigroups of $\mathbb N^h$, that includes 
the value semigroups of rings associated to curve singularities and their blowups, and 
allows to study combinatorically the properties of these rings.
In this paper we give a characterization of almost symmetric good subsemigroups of $\mathbb N^h$,
extending known results in numerical semigroup theory and in one-dimensional ring theory,
and we apply these results to obtain new results on almost Gorenstein one-dimensional
analytically unramified rings.

\noindent {\bf Keywords}: good semigroup, almost-symmetry, curve singularity, almost Gorenstein rings, canonical ideal.

\noindent {\bf Mathematics Subject Classification}: 13A15, 13H10, 14H99, 20M12, 20M25
\end{abstract}

\noindent 
{\bf Acknowledgments.} The second author was partially funded by the project “Propriet\`a locali e globali
di anelli e di variet\`a algebriche”-PIACERI 2020–22, Universit\`a degli Studi
di Catania.

\section*{Introduction}


Let $\mathbb N$ be the set of nonnegative integers; \emph{good semigroups} are a class of subsemigroups of $\N^h$, with $h \in \mathbb N$, $h \geq 1$ (see the formal definition in Section 1). Their definition was given in \cite{a-u} in order to describe value semigroups of noetherian, analytically unramified, 
one-dimensional, semilocal reduced rings, even though the properties of these value semigroups were already studied in \cite{two}, \cite{c-d-gz}, \cite{C-D-K}, \cite{danna1}, \cite{felix}, \cite{delgado}, \cite{garcia}. The class of rings mentioned above includes the local rings arising from curve singularities (and from their blowups), possibly with more than one branch. However, as shown in \cite{a-u}, there are good semigroups that are not value semigroups of any ring. 


\medskip
If $S$ is a value subsemigroup of $\mathbb N^h$, the integer $h$ represents the number of branches of the singularity. In the one branch case, the value semigroup is a numerical semigroup, and the algebraic properties of the ring can be tran\-slated and studied at semigroup level. Some of the results in this direction have been proved to hold for $h>1$ as well. For instance, the numerical cha\-racterization of a one-dimensional analytically irreducible local Gorenstein domain via symmetric numerical semigroups (see \cite{kunz}) can be generalized to analytically unramified local rings (see \cite{delgado} and also \cite{C-D-K}); analogously, the numerical characterization of the canonical module in the analytically irreducible case (see \cite{jager}) can be given also in the more general case (see \cite{danna1}).

Dealing with birational extensions of an analytically unramified ring $R$, one has to work with both local and semilocal rings; this fact is well represented at semigroup level, since it is possible to give a satisfying definition of local good semigroup that, for value semigroups, corresponds to the local ring case; with this definition one can show that any good semigroup is a direct product of local good semigroups that, when $S=v(R)$, correspond to the value semigroups of the localizations of $R$ at its maximal ideals (see \cite{a-u}).

\medskip
These facts motivated the interest in studying good semigroups, and, since not every good semigroup is a value semigroup of some ring, it is often necessary to work on good semigroups in a purely combinatorial setting.
Despite their name, good semigroups present some problems that make their study difficult; e.g. they are not finitely generated as monoids (but, on the other hand, they can be completely determined by a finite set of elements; see \cite{garcia} and \cite{D-G-M-T}) and they are not closed under finite intersections. 
Moreover, if we consider good ideals of good semigroups (e.g. the ideals
arising as values of ideals of the corresponding ring), this class is not closed under sums and differences (see, for instance, \cite{a-u} and \cite{KST}).
Therefore, unlike what happens for numerical semigroups (in analogy to analytically irreducible domains), it is not easy to define for good semigroups some concepts that translates analogous ring concepts, like the embedding dimension and the type (see \cite{DGM2} and \cite{MZ}).
Finally, when working with good semigroups, technical problems often arise if $h\geq 3$ and, for this reason,
several papers on the subject are limited to the very special case $h=2$ (see e.g. \cite{garcia}, \cite{two}, \cite{DGM}
and \cite{DGM2}).

\medskip
Almost Gorenstein analytically unramified rings and almost symmetric numerical semigroups were introduced in \cite{BF} in order to find two classes of rings and numerical semigroups, respectively, that are close to Gorenstein rings and to symmetric numerical semigroups, in certain aspects.
The notion of almost Gorenstein ring was generalized for any one-dimensional ring in \cite{GMP} and, subsequently, for rings in any dimension admitting a canonical ideal (see \cite{GTT}). Since then, almost Gorenstein rings have been intensively studied, being a useful intermediate class between Cohen-Macaulay and Gorenstein rings. Similarly, the notion of almost symmetric numerical semigroups was generalized to good semigroups in \cite{a-u}.

\medskip
A remarkable property shown in \cite{BF} for a one-dimensional analytically unramified local ring $(R, \mathfrak m)$ is that such a ring is almost Gorenstein of maximal embedding dimension if and only if $\mathfrak m:\mathfrak m$ is Gorenstein, and an analogous result was proved in the same paper for numerical semigroups. These two results were the starting point for other similar investigations. In particular, in \cite{GMP} the authors generalized the first one for any one-dimensional Cohen-Macaulay local ring admitting a canonical ideal and, in \cite{DS2}, the following more general result is proved:

\vspace{5pt}
\noindent {\bf Proposition} \emph{\cite[Proposition 3.2]{DS2}. Let $(R, \mathfrak m)$ be a one-dimensional Cohen-Macaulay local ring
admitting a canonical ideal. Then
$R$ is a one-dimensional almost Gorenstein
ring if and only if $\mathfrak m$ is a canonical ideal of $\mathfrak m: \mathfrak m$.}

\vspace{5pt}
Note that an analogous result was stated in a non-noetherian setting in \cite{B1}, but with an additional hypothesis. 
As for numerical semigroups, the corresponding result of the above proposition was proved in \cite{B}.

\medskip
If we consider good semigroups, fewer results of this kind are known. More precisely, a partial result was proved in \cite{DGM2} for the case $h=2$, assuming that $S$ is a local good semigroup of maximal embedding dimension. In this context, the main goal of this paper is to prove the following result:

\vspace{5pt}
\noindent
{\bf Theorem 2.4.} \label{main} \emph{Let $h \geq 2$, let
$S \subseteq \mathbb N^h$  be a local almost symmetric good semigroup and let $M=S\setminus \{\boldsymbol{0}\}$
be its maximal ideal.
Then $S$ is almost symmetric if and only if $M-M=\{\al \in \mathbb N^h \mid \al+M \subseteq M\}$ is a good semigroup and $M$ is a canonical ideal of $M-M$.}

\vspace{5pt}
Hence, after recalling in Section 1 the basic notions necessary to develop our arguments, 
in Section 2 we give a description of the canonical ideal of $M-M$, assuming it is a good semigroup (Proposition \ref{miracolo}), and we use it to prove the main theorem,
together with a characterization of local almost symmetric good semigroups of maximal embedding dimension via $M-M$ (Corollary \ref{alm-symm-good}). Afterwards, we generalize these results to
the non-local case and, in Section 3, we show how our results imply new characterizations of almost Gorenstein analytically unramified one-dimensional local or semilocal rings (see, e.g. Proposition \ref{bella}).

\section{Preliminaries on good semigroups}

Let $\mathbb N$ be the set of nonnegative integers and let $h\geq 2$ be an integer. As usual, $\le$ stands for the natural partial ordering on $\mathbb N^h$: $\al \le \boldsymbol{\beta}$ if $\alpha_i\le \beta_i$ for every $i=1, \dots, h$.
Given $\boldsymbol{\alpha},\boldsymbol{\beta}\in \mathbb N^h$, the infimum of the set $\{\boldsymbol{\alpha},\boldsymbol{\beta}\}$ (with respect to $\le$) will be denoted by $\boldsymbol{\alpha}\wedge \boldsymbol{\beta}$. Hence $\boldsymbol{\alpha}\wedge \boldsymbol{\beta}=(\min(\alpha_1,\beta_1),\dots, \min(\alpha_h,\beta_h))$.

A submonoid $S \cont \N^h$ is said to be a \emph{good semigroup} if the following
three properties are satisfied:

\begin{itemize}

    \item[\bff{(G1)}] if $\al, \be \in S$, then $\al \wedge \be \in S$;
    \item[\bff{(G2)}] if $\al, \be \in S$, $\al \neq \be$ and $\alpha_i=\beta_i$ for some $i \in \{1, \dots, h \}$, then there exists $\de \in S$ such that $\delta_i > \alpha_i=\beta_i$ and $\delta_j \geq \min(\alpha_j, \beta_j)$ for all $j \neq i$ (and equality holds if $\alpha_j \neq \beta_j$);
    \item[\bff{(G3)}] there exists $\boldsymbol{c} \in \N^h$ such that $\boldsymbol{c} + \N^h \cont S$.
    
\end{itemize}

Notice that for $h=1$, properties \bff{(G1)} and \bff{(G2)} become meaningless
and property \bff{(G3)}, for a submonoid $S$ of $\mathbb N$, means that $S$ is
a numerical semigroup. Hence we can consider numerical semigroups as the good semigroups in the case $h=1$.

A \emph{relative ideal} of a good semigroup $S \subseteq \N^h$ is a subset $E$ of $\mathbb Z^h$ such that $\al+\be \in E$ for any $\al \in S$ and $\be \in E$, and there is a $\de \in S$, such that $\de +E \subseteq S$. If $E\subseteq S$ then we will simply say that $E$ is an \emph{ideal} of $S$.
A relative ideal $E$ satisfying \bff{(G1)} and \bff{(G2)} is called \emph{good ideal}. Note that a relative ideal necessarily satisfies \bff{(G3)}, since it is closed up to sums with elements of $S$.
Observe that, given two relative ideals $E, F$ of $S$, the following sets are relative ideals as well: 
\begin{itemize}
    \item $E+F := \{\al+\be \mid \al \in E, \be \in F \}$,
    \item $E-F := \{ \al \in \Z^d \mid \al+F \cont E \}$.
\end{itemize}
However, if $E$ and $F$ are good, it is not true in general that $E+F$ and $E-F$ are good, as shown in \cite[Example 2.10]{a-u}.

\medskip
If the only element of $S$ with a zero component is $\boldsymbol{0}=(0,0, \dots 0)$, we will say that $S$ is \emph{local}. In this case, the set $M:=S \setminus \{\boldsymbol{0}\}$ is a
good ideal and we will call it the \emph{maximal ideal} of $S$. Moreover, by property \bff{(G1)}, a local good semigroup has a minimum nonzero element. We will denote it by $\boldsymbol{e}=(e_1,\dots, e_h)$ and call it the \emph{multiplicity} vector of $S$.

\medskip
In \cite{a-u} it is proved that any good semigroup can be uniquely expressed as a direct product of local ones, i.e. $S=S_1\times \dots \times S_r$ and, denoting by $M_i$ the maximal ideal of $S_i$, we will denote by $J$ the product $M_1\times \dots \times M_r$. In this case, $J$ is a good ideal, that we will call the \emph{Jacobson ideal} of $S$. Notice that any $S_i$ in this expression is a
subsemigroup of $\mathbb N^{h_i}$, with $h_1+h_2+\dots+h_r=h$, and the subset of components $\{j_1, \dots ,j_{h_i}\}\subseteq \{1,\dots,h\}$, where $S_i$ lives,
will be called the \emph{support} of $S_i$.

The above notations will be fixed for the rest of the paper. In addition, we will always assume that $S \neq \N^h$.

Notice that, for any relative ideal $E$,
it holds true that $E-E$ is a submonoid of $\N^h$, not necessarily good, containing $S$. 
In particular, if $S$ is a local good semigroup, then
$S-M=M-M=\{\al \in \mathbb N^h \mid \al+M \subseteq M\}$ is both a monoid 
such that $S \subsetneq M-M \subseteq \N^h$ and a relative ideal of $S$.
The same holds, in general, for $J-J$. 

In the next section, we will need the following lemma in order to deal with the non-local case. Its proof is straightforward. 

\begin{lem}
Let $S\subset \N^h$ be a good semigroup and let $S=S_1\times \dots \times S_r$ be its representation
as direct product of local good semigroups. Then $J-J=(M_1-M_1)\times \dots \times (M_r-M_r)$.
\end{lem}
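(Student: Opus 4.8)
The plan is to prove the identity $J-J=(M_1-M_1)\times\dots\times(M_r-M_r)$ by showing the two inclusions separately, exploiting the product structure $S=S_1\times\dots\times S_r$ and $J=M_1\times\dots\times M_r$. The underlying principle is that membership in a difference ideal $J-J=\{\al\in\N^h\mid \al+J\subseteq J\}$ should decouple coordinate-block by coordinate-block, because $J$ itself is a Cartesian product. Throughout I write an element $\al\in\N^h$ in block form $\al=(\al^{(1)},\dots,\al^{(r)})$ according to the supports of the factors $S_1,\dots,S_r$, so that $\al^{(i)}\in\N^{h_i}$.

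For the inclusion $(M_1-M_1)\times\dots\times(M_r-M_r)\subseteq J-J$, I would take $\al=(\al^{(1)},\dots,\al^{(r)})$ with each $\al^{(i)}\in M_i-M_i$, i.e. $\al^{(i)}+M_i\subseteq M_i$ for every $i$. Given an arbitrary $\be=(\be^{(1)},\dots,\be^{(r)})\in J$, by definition of $J$ each block satisfies $\be^{(i)}\in M_i$, hence $\al^{(i)}+\be^{(i)}\in M_i$ for all $i$, and therefore $\al+\be\in M_1\times\dots\times M_r=J$. This shows $\al+J\subseteq J$, that is $\al\in J-J$. This direction is essentially immediate from the definitions.

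For the reverse inclusion $J-J\subseteq(M_1-M_1)\times\dots\times(M_r-M_r)$, I would take $\al\in J-J$ and must show $\al^{(i)}+M_i\subseteq M_i$ for each fixed $i$. The key step is to test $\al$ against cleverly chosen elements of $J$: given any $\bm^{(i)}\in M_i$, I want to build an element $\be\in J$ whose $i$-th block is $\bm^{(i)}$. Since each $M_j$ is the maximal ideal of the local good semigroup $S_j$, it is nonempty (indeed it contains the multiplicity vector $\boldsymbol e^{(j)}$ of $S_j$), so I can set the $j$-th block of $\be$ equal to any fixed element of $M_j$ for $j\neq i$ and equal to $\bm^{(i)}$ for $j=i$; then $\be\in J$. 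From $\al+\be\in J$ I read off the $i$-th block, obtaining $\al^{(i)}+\bm^{(i)}\in M_i$. As $\bm^{(i)}\in M_i$ was arbitrary, this gives $\al^{(i)}\in M_i-M_i$; and since this holds for every $i$, we conclude $\al\in(M_1-M_1)\times\dots\times(M_r-M_r)$.

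The only mild subtlety, and the step I would be most careful about, is confirming that each block $M_j$ is nonempty so that the "filler" element $\be$ can be constructed — but this is guaranteed because $S\neq\N^h$ together with the local structure ensures each $S_j$ has a nonzero maximal ideal containing its multiplicity vector $\boldsymbol e^{(j)}$, as recalled in the preliminaries. One should also note that the product decomposition of $\N^h$ respecting the supports makes membership in a Cartesian product equivalent to blockwise membership, which is what lets the difference operation distribute across the factors. Given these observations, the proof is indeed straightforward, matching the authors' remark that its proof requires no more than unwinding the definitions.
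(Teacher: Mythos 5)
Your proof is correct and is exactly the ``straightforward'' argument the authors had in mind when they omitted it: the two inclusions follow by unwinding the definitions, with the reverse one handled by testing an element of $J-J$ against elements of $J$ having a prescribed $i$-th block (using that each $M_j$ is nonempty, which holds simply because each local good semigroup $S_j$ has nonzero elements by \textbf{(G3)}, independently of the assumption $S\neq\N^h$). No gaps; nothing further to compare.
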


A relative ideal that plays a central role in the theory of good semigroups is the standard canonical ideal. In order to give its definition, we need to introduce some more notation.

\begin{defin}
Let $\al \in \Z^d$, $i \in \{1, \dots, d \}$. We define:
	        \begin{itemize}
	            \item[] $\D_i(\alpha) := \{ \beta \in \Z^d \mid \alpha_i=\beta_i \text{, } \alpha_j < \beta_j \enspace \forall j \neq i \}$;
	            \item[] $\D(\alpha) := \{ \beta \in \Z^d \mid \exists \, i 
	            \text{ such that } \alpha_i=\beta_i \text{, } \alpha_j < \beta_j \enspace \forall j \neq i \}= \bigcup_i \Delta_i(\alpha)$;
	            \item[] $\Delta_i^S(\alpha) :=\Delta_i(\alpha) \cap S$;
	            \item[] $\Delta^S(\alpha) :=\Delta(\alpha) \cap S$.
	        \end{itemize}
\end{defin}
The minimum $\boldsymbol{c} \in \N^h$ satisfying \bff{(G3)} will be called the \emph{conductor} of $S$. 
We additionally set $\ga := \boldsymbol{c} - \boldsymbol{1}$ (where $\boldsymbol{1}=(1,1,\dots, 1)$) and call it the \emph{Frobenius vector} of $S$.
Observe that, by properties \bff{(G1)} and \bff{(G2)}, $\D^S(\ga)= \varnothing$.
\begin{defin}
The \emph{standard canonical ideal} of $S$ is
$$\K(S):=\{ \al \in \Z^d \mid \Delta^S(\ga - \al) = \varnothing \}. $$
\end{defin}
This relative ideal was firstly defined in the local case in \cite{danna1}. In \cite{a-u}
it was noticed that its definition can be stated in the general case 
and that, if $S=S_1 \times \dots \times S_r$, then $K(S)=K(S_1)\times \dots \times K(S_r)$. It is not difficult to see that $\K(S)$ is a good relative ideal of $S$ such that
$S \cont \K(S) \cont \N^h$.

\medskip
A good relative ideal $\K'$ of $S$ is called \emph{canonical ideal} of $S$ if there exists $x \in \Z^d$ such that $\K'=\K(S)+x$.

The class of canonical ideals can be characterized via a duality property that we are going to state. This property was proved in \cite{KST} for the local case, but it can be stated for any good semigroup, using the representation of $\K(S)$ as a direct product of standard canonical ideals. 


\begin{lem}
\label{Lemma tozzo}\cite[Theorem 5.2.7]{KST}
Given a good relative ideal $\K'$ of $S$, the following conditions are equivalent:
\begin{itemize}
    \item[(i)] $\K'$ is a canonical ideal of $S$;
    \item[(ii)] $\K'-(\K'-I)=I$ for every good relative ideal $I$ of $S$.
\end{itemize}
\end{lem}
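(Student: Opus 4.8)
The plan is to prove $(i)\Ra(ii)$ first, by reducing to the standard canonical ideal and then to the local case, and afterwards to deduce the converse from the same circle of ideas.

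\textbf{Reduction to $\K=\K(S)$ and to the local case.} Since $\K'$ canonical means $\K'=\K(S)+x$ for some $x$, I would first record the two translation identities $(E+x)-F=(E-F)+x$ and $E-(F+x)=(E-F)-x$, both immediate from the definition of $-$. Writing $\K'=\K+x$ with $\K=\K(S)$ and combining them gives $\K'-(\K'-I)=\K-(\K-I)$ for every relative ideal $I$; hence it suffices to prove $\K-(\K-I)=I$ for the standard canonical ideal. Next, using $S=S_1\times\dots\times S_r$ together with the facts that every good relative ideal splits as $I=I_1\times\dots\times I_r$, that $-$ distributes blockwise over the product, and that $\K(S)=\K(S_1)\times\dots\times\K(S_r)$, the identity reduces to the analogous one in each local factor. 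So from here on $S$ is local and $\K=\K(S)$.

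\textbf{Local core.} One inclusion is formal and holds for any relative ideal: each $\al\in\K-I$ satisfies $\al+I\cont\K$, so $(\K-I)+I\cont\K$ and therefore $I\cont\K-(\K-I)$. Applying the same remark to $\K-I$ yields the triple identity $\K-(\K-(\K-I))=\K-I$. For the reverse inclusion I would invoke the distance function $d(F\setminus E)$ for good relative ideals $E\cont F$ (see \cite{KST}), which is additive along chains, vanishes exactly when $E=F$, and can be computed as the common length of a saturated chain from $E$ to $F$. The engine is its behaviour under duality: \emph{for the canonical ideal $\K$, the assignment $E\mapsto\K-E$ carries good relative ideals to good relative ideals and preserves the distance}, i.e. $d(F\setminus E)=d\big((\K-E)\setminus(\K-F)\big)$. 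Granting this, set $I'':=\K-(\K-I)$ and fix a good relative ideal $F\tnoc I''$ (for instance a suitable translate of $\N^h$). Additivity gives $d(F\setminus I)=d(F\setminus I'')+d(I''\setminus I)$, while distance-duality gives $d(F\setminus I)=d\big((\K-I)\setminus(\K-F)\big)$ and $d(F\setminus I'')=d\big((\K-I'')\setminus(\K-F)\big)$. Since $\K-I''=\K-I$ by the triple identity, the first two distances coincide, forcing $d(I''\setminus I)=0$, hence $I''=I$. This proves $(i)\Ra(ii)$.

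\textbf{Converse.} Assume $(ii)$. Taking $I=S$ and using $\boldsymbol{0}\in S$ (so $\K'-S=\K'$ and $S-S=S$) yields the normalization $\K'-\K'=S$. Granting again that the dual $\K'-E$ of a good relative ideal is good, reflexivity for all good $I$ makes $E\mapsto\K'-E$ an order-reversing involution on good relative ideals, hence an anti-automorphism of their poset; such a map sends saturated chains to saturated chains and therefore preserves $d$. By the characterization of canonical ideals as exactly the good relative ideals whose associated duality preserves the distance function, $\K'$ is canonical. Equivalently, feeding $I=\K'$ into the (already proven) duality of $\K(S)$ gives $\K'=\K(S)-(\K(S)-\K')$, and distance-preservation identifies $\K(S)-\K'$ as a translate $S+x$ of $S$, whence $\K'=\K(S)-(S+x)=\K(S)-x$ is canonical.

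\textbf{Main obstacle.} The genuinely hard input, in both directions, is the distance-duality for canonical ideals together with the fact that the dual $\K-E$ of a good ideal is again good. This is precisely where canonicity is used: as recalled in the text, for arbitrary good ideals $E,F$ the difference $E-F$ need not be good, so the whole argument hinges on controlling both goodness and length under the duality induced specifically by $\K(S)$, and on the well-definedness and additivity of the distance function that makes the round-trip computation valid.
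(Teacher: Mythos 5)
Your first paragraph and your forward implication line up with how the paper treats this statement: the paper gives no proof of it at all, quoting it from \cite[Theorem 5.2.7]{KST}, where it is proved in the local case, and remarking that the general case follows from $\K(S)=\K(S_1)\times\dots\times\K(S_r)$ together with the componentwise behaviour of the ideal difference --- exactly your translation-plus-product reduction. Your proof of $(i)\Rightarrow(ii)$ is a correct reconstruction from the KST toolbox you explicitly invoke (goodness of $\K(S)-E$ for $E$ good, well-definedness and additivity of the distance $d$, the implication $d(F\setminus E)=0\Rightarrow E=F$, and distance-duality for $\K(S)$): the round-trip computation via the triple identity $\K-(\K-(\K-I))=\K-I$ is sound, and all the distances involved are defined because $I''=\K-(\K-I)$ is good by applying goodness of duals twice.

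The genuine gap is in the converse $(ii)\Rightarrow(i)$, which is in fact the direction this paper leans on (it is what allows Proposition \ref{miracolo} to conclude that $\K(S)-(M-M)$ is canonical from biduality, and what reduces Theorem \ref{main} to checking $M-(M-I)=I$). Three problems. First, your argument needs $E\mapsto \K'-E$ to be an involution \emph{on good} relative ideals, i.e. that $\K'-E$ is good whenever $E$ is; this is a KST theorem for the standard canonical ideal (Lemma \ref{ideale buono} of the paper), but hypothesis $(ii)$ for an arbitrary $\K'$ says nothing about goodness of $\K'-I$, and duals of good ideals are in general not good --- you only ``grant'' this, but nothing grants it. Second, both of your closing steps are circular: the ``characterization of canonical ideals as exactly the good relative ideals whose associated duality preserves the distance function'' is, up to the equivalence being proved, the lemma itself and is not among the inputs you listed; and the step ``distance-preservation identifies $\K(S)-\K'$ as a translate $S+x$'' is precisely the content of $(ii)\Rightarrow(i)$. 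Indeed, if you try to verify it by your own method --- take $x$ minimal in $L:=\K(S)-\K'$, so $S+x\subseteq L$, and compute $d\bigl(L\setminus(S+x)\bigr)$ by duality with respect to $\K(S)$ --- you get $d\bigl((\K(S)+(-x))\setminus \K'\bigr)$, whose vanishing is exactly the desired conclusion $\K'=\K(S)+(-x)$: a circle. Third, your claim that a poset anti-automorphism preserves $d$ presupposes that $d(F\setminus E)$ equals the common length of saturated chains of \emph{good ideals} between $E$ and $F$; but $d$ is defined via chains of elements, not chains of ideals, and this identification is yet another nontrivial fact outside your granted inputs. A non-circular proof of the converse needs different machinery --- in \cite{KST} it rests on their intrinsic definition of canonical ideals (maximality among good ideals with fixed conductor) and the theorem identifying these with the translates of $\K(S)$ --- none of which appears in your sketch.
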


In the same paper, the authors proved the following key result, that, again, holds true in the non-local case as well. 
\begin{lem}
\label{ideale buono}\cite[Proposition 5.2.7]{KST}
If $E$ is a good relative ideal of $S$, then $\K(S)-E$ is a good relative ideal of $S$.
\end{lem}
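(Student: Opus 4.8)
The plan is to prove Lemma~\ref{ideale buono} by reducing the general statement to the local case via the direct-product decomposition, and then proving the local case by verifying properties \textbf{(G1)} and \textbf{(G2)} directly for $\K(S)-E$. First I would observe that since $S=S_1\times \dots \times S_r$, we have $\K(S)=\K(S_1)\times \dots \times \K(S_r)$, and a good relative ideal $E$ of $S$ decomposes compatibly as $E=E_1\times \dots \times E_r$ with each $E_i$ a good relative ideal of $S_i$; moreover the difference $\K(S)-E$ splits as a product $\bigl(\K(S_1)-E_1\bigr)\times \dots \times \bigl(\K(S_r)-E_r\bigr)$. Since a product of good ideals is good, it suffices to treat $r=1$, i.e. to assume $S$ local with standard canonical ideal $\K:=\K(S)$.

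In the local case, set $F:=\K-E=\{\al\in\Z^d\mid \al+E\cont \K\}$. We already know from the general remarks in the preliminaries that $F$ is a relative ideal, so the task is purely to verify \textbf{(G1)} and \textbf{(G2)}. The cleanest route is to exploit the characterization of the standard canonical ideal via the $\D$-sets, namely $\K=\{\al\mid \D^S(\ga-\al)=\varnothing\}$, which one can reformulate as a self-duality: an element $\al$ lies in $\K$ precisely when, for each $i$, the ``complementary'' slice at $\ga-\al$ avoids $S$. The key structural fact I would use is that $\K$ itself satisfies the sharper emptiness condition $\D^{\K}(\ga'-\be)=\varnothing$ for the appropriate Frobenius vector, so that membership in $F$ can be tested coordinate-slice by coordinate-slice against the good behaviour of $\K$.

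To check \textbf{(G1)}, take $\al,\be\in F$; for any $\e\in E$ we have $\al+\e,\be+\e\in\K$, and since $\K$ is good, $(\al+\e)\wedge(\be+\e)=(\al\wedge\be)+\e\in\K$, whence $\al\wedge\be\in F$. So \textbf{(G1)} is immediate and passes through the $\wedge$-compatibility of addition. The genuine work is \textbf{(G2)}: given $\al,\be\in F$ with $\al\neq\be$ and $\alpha_i=\beta_i$ for some $i$, I must produce $\de\in F$ with $\delta_i>\alpha_i$ and the prescribed behaviour on the other coordinates. The strategy is to find, for a suitable witness, the coordinatewise infimum of all elements of $\K$ lying on the relevant slice $\D_i$ above $\al+\e$ and $\be+\e$, and argue that this infimum (minus the right shift) lands in $F$; here one uses that $E$ is finitely determined up to its conductor so that only finitely many constraints $\al+\e\in\K$ are binding, together with property \textbf{(G2)} for $\K$ to lift the equality on coordinate $i$ to a strict increase.

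The hard part will be exactly this verification of \textbf{(G2)} for the difference ideal: unlike \textbf{(G1)}, it is not a formal consequence of addition commuting with $\wedge$, because the existence of the witness $\de$ for $F$ must be assembled from witnesses for $\K$ over the whole (infinite) set $E$ simultaneously, and one must control that the strict inequality in coordinate $i$ survives intersecting over all $\e\in E$. I expect the decisive ingredient to be the special self-dual structure of the \emph{standard canonical} ideal $\K(S)$ specifically --- the statement is false for arbitrary good ideals in place of $\K(S)$, and it is precisely the defining emptiness condition $\D^S(\ga-\al)=\varnothing$ that forces the slices of $F$ to close up correctly, so the proof must genuinely invoke that $E$ is being subtracted \emph{from $\K(S)$} and not merely that $\K(S)$ is some good ideal.
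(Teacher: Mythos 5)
Your proposal has a genuine gap exactly where you yourself locate the difficulty: the verification of \textbf{(G2)} for $F:=\K(S)-E$ is never carried out, and the strategy you sketch does not work as stated. Your \textbf{(G1)} argument is correct (translation commutes with componentwise infima), but it only exploits the fact that $F=\bigcap_{\eta\in E}\bigl(\K(S)-\eta\bigr)$ is an intersection of good relative ideals, and intersections preserve \textbf{(G1)} while destroying \textbf{(G2)} in general --- this is precisely why the difference of two arbitrary good ideals need not be good (the paper points to Example 2.10 of Barucci--D'Anna--Fr\"oberg for this failure). Your proposed construction for \textbf{(G2)} --- take, for each $\eta\in E$, a witness $\de^{(\eta)}\in\K(S)$ supplied by \textbf{(G2)} for $\K(S)$ at $\al+\eta$, $\be+\eta$, and then form the coordinatewise infimum of the $\de^{(\eta)}-\eta$ --- founders on the membership test: to conclude that the resulting $\de$ lies in $F$ you must check $\de+\eta'\in\K(S)$ for \emph{every} $\eta'\in E$, whereas each witness only controls its own $\eta$; from $\de\le\de^{(\eta')}-\eta'$ one gets $\de+\eta'\le\de^{(\eta')}\in\K(S)$, which proves nothing since $\K(S)$ is not closed downwards. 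The auxiliary ingredients you invoke to bridge this --- "only finitely many binding constraints" and a "sharper emptiness condition $\D^{\K}(\ga'-\be)=\varnothing$" --- are not given any precise formulation, let alone proof. You correctly identify that the statement is special to the canonical ideal, but identifying the obstacle is not the same as overcoming it.

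For the comparison you asked about: the paper does not prove this lemma at all. It quotes it as Proposition 5.2.7 of Korell--Schulze--Tozzo, remarking only that the product decomposition $\K(S)=\K(S_1)\times\dots\times\K(S_r)$ extends it to the non-local case; your reduction step therefore matches the paper's (though note it silently uses that every good relative ideal of a product of local good semigroups decomposes compatibly as a product, itself a citable fact rather than an obvious one). Moreover, as the paper observes right after Proposition 2.3, the proof in the cited source does not assemble \textbf{(G2)}-witnesses by hand: it proceeds through an explicit computation of $\K(S)-E$ for a good relative ideal $E$, from which goodness is read off. So to turn your outline into a proof you would need either to reproduce that explicit description of $\K(S)-E$ or to supply a genuinely new argument for \textbf{(G2)}; as it stands, the substantive half of the lemma is asserted rather than proved.
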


We list below three key properties of canonical ideals that will be useful in the next section. Let $K'$ be a canonical ideal of $S$ and let $E, F$ be two relative ideals. Then:
\begin{itemize}
    \item $E\subseteq F$ if and only if $K'-F \subseteq K'-E$;
    \item $E= F$ if and only if $K'-F = K'-E$;
    \item $K'-K'=S$.
\end{itemize}
These properties can be easily deduced by Lemma \ref{Lemma tozzo}, keeping in mind, as for the third one, that $K'-K'$ is independent of translations of $K'$ and that $K(S)=K(S)-S$. 

\medskip

An important class of good semigroups is the class of  symmetric semigroups, which corresponds to the class of Gorenstein rings. 

\begin{defin}
A good semigroup $S$ is called \emph{symmetric} if, for every $\al \in \N^d$, $\al \in S$ if and only if $\D^S(\ga - \al)= \varnothing$.
\end{defin} 

Notice that, for any good semigroup $S$, $\al \in S$ implies $\D^S(\ga-\al)= \varnothing$, but in general the converse is not true. 

The definition of symmetry is clearly equivalent to the condition $S =\K(S)$; hence a good semigroup $S$ is symmetric if and only if every ideal of $S$ is bidual (or reflexive), i.e. $S-(S-E)=E$. In the non-local case, the symmetry of a good semigroup is also equivalent to the property that every component $S_i$ of $S$ is symmetric. This fact was proved explicitly 
in \cite{DGM2} for the case $h=2$, but the same proof holds in general. 
    
\medskip
Good semigroups are not finitely generated in general. Therefore, it is not possible 
to define the embedding dimension of a local good semigroup just by generalizing the definition given for
local rings or for numerical semigroups. A definition of embedding dimension of a local good semigroup is given in \cite{MZ}, but it is very technical and we will not need to use it
in this paper. In any case, keeping in mind that a one-dimensional noetherian local ring $(R,\mathfrak m)$ is of maximal embedding dimension (i.e. its embedding dimension is the same as its multiplicity) if and only if $\mathfrak m:\mathfrak m = x^{-1}\mathfrak m$ (where $x$ is a minimal reduction of $\mathfrak m$) and the same holds for numerical semigroups,
we can say that a local good semigroup is of \emph{maximal embedding dimension}
if $M-M=M-\boldsymbol{e}$. Notice that this condition implies that 
$M-M$ is a good semigroup (not necessarily local).
This definition is consistent with the one
given in \cite{MZ}.

\section{Almost symmetric good semigroups}
	
\subsection{The local case}

\begin{defin}
A local good semigroup $S$ is called \emph{almost symmetric} if $K(S)+M=M$. 
\end{defin}

Notice that this is equivalent to saying that $K(S)+M \subseteq M$, since the reverse inclusion always holds, because $\boldsymbol{0} \in K(S)$.

Moreover, for any local good semigroup,
it holds true that $M-M \cont K(S) \cup \D(\ga)$: if
    $\al \in (M-M) \setminus \D(\g)$ and $\al \nin K(S)$, then there would exist a nonzero $\be \in \D^S(\ga-\al)$; therefore $\al+\be \in \D^S(\ga)$, contradiction.
    Notice that, in the case of numerical semigroups, the above inclusion is reduced to $M-M \cont K(S) \cup F$, where $F$ is the Frobenius number of $S$.
    
In \cite{a-u} it is shown that almost symmetric local good semigroups can be
characterized 
by the equality of these two sets:
\begin{prop}\label{tipo definito}\cite[Lemma 3.5]{a-u}
A local good semigroup $S$ is almost symmetric if and only if $\K(S) \cup \Delta (\ga) = M - M$.
\end{prop}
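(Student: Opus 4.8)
The plan is to exploit the inclusion $M-M \cont \K(S)\cup \D(\ga)$ established immediately before the statement, which holds for every local good semigroup. Since that one inclusion comes for free, proving the claimed equality $\K(S)\cup\D(\ga)=M-M$ reduces to proving the reverse inclusion $\K(S)\cup\D(\ga)\cont M-M$, which in turn splits into the two inclusions $\K(S)\cont M-M$ and $\D(\ga)\cont M-M$. I will show that the second one holds unconditionally, so that the whole content of the statement is concentrated in the first one, which I will identify with almost-symmetry.

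First I would observe that the inclusion $\K(S)\cont M-M$ is literally a reformulation of almost-symmetry. Indeed, by definition $M-M=\{\al\in\N^h \mid \al+M\cont M\}$, so $\K(S)\cont M-M$ says exactly that $\al+M\cont M$ for every $\al\in \K(S)$, i.e. $\K(S)+M\cont M$; and since $\boldsymbol{0}\in\K(S)$ already forces $M\cont \K(S)+M$, this is equivalent to $\K(S)+M=M$, which is the defining condition. Hence $S$ is almost symmetric if and only if $\K(S)\cont M-M$.

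The main step is then to prove that $\D(\ga)\cont M-M$ holds for every local good semigroup, with no symmetry assumption. Here I would first note that, as $S\neq\N^h$ is local, the conductor $\boldsymbol{c}$ lies in $M$: it belongs to $S$ (being $\boldsymbol{c}=\boldsymbol{c}+\boldsymbol{0}\in \boldsymbol{c}+\N^h$) and it is nonzero, so all its components are positive, giving $\boldsymbol{c}\geq\boldsymbol{1}$ and therefore $\ga=\boldsymbol{c}-\boldsymbol{1}\geq\boldsymbol{0}$; in particular $\D(\ga)\cont\N^h$. Now fix $\al\in\D(\ga)$ and $\mu\in M$: there is an index $i$ with $\al_i=\ga_i=c_i-1$ and $\al_j>\ga_j$, i.e. $\al_j\geq c_j$, for every $j\neq i$. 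Using locality we have $\mu_i\geq1$, so $(\al+\mu)_i\geq c_i$, while for $j\neq i$ we get $(\al+\mu)_j\geq c_j+\mu_j\geq c_j$. Thus $\al+\mu\geq\boldsymbol{c}$, whence $\al+\mu\in \boldsymbol{c}+\N^h\cont S$ and, being nonzero, $\al+\mu\in M$. This gives $\al+M\cont M$, i.e. $\al\in M-M$, proving $\D(\ga)\cont M-M$.

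Putting the pieces together concludes the argument: combining $\D(\ga)\cont M-M$ (always) with the equivalence ``$S$ almost symmetric $\Leftrightarrow \K(S)\cont M-M$'', we see that $S$ is almost symmetric if and only if $\K(S)\cup\D(\ga)\cont M-M$; and since the opposite inclusion $M-M\cont \K(S)\cup\D(\ga)$ always holds, this is in turn equivalent to the equality $\K(S)\cup\D(\ga)=M-M$. The only genuinely computational point is the verification that $\D(\ga)\cont M-M$, where the locality hypothesis (guaranteeing $\mu_i\geq1$ for every $\mu\in M$) is precisely what pushes $\al+\mu$ above the conductor; I expect this to be the part needing most care, chiefly to handle the mixed behaviour at the distinguished coordinate $i$, where $\al_i$ only equals $c_i-1$ and the extra unit must come from $\mu$.
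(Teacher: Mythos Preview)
Your argument is correct. Note, however, that the paper does not actually supply a proof of this proposition: it is quoted as \cite[Lemma~3.5]{a-u} and used as a black box, so there is no in-paper proof to compare against. What the paper does contribute is the preparatory observation $M-M\cont\K(S)\cup\D(\ga)$ stated just before the proposition, and your proof builds on exactly that, reducing the equality to the reverse inclusion and splitting it into the unconditional part $\D(\ga)\cont M-M$ and the part $\K(S)\cont M-M$ which you correctly identify with the defining condition $\K(S)+M=M$. The verification that $\D(\ga)\cont M-M$ is clean: locality ensures every $\mu\in M$ has all coordinates $\geq 1$, which is precisely what is needed at the distinguished index $i$ to push $\al+\mu$ past the conductor; the remaining coordinates are already $\geq c_j$ from $\al$ alone. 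One minor cosmetic point: you might state explicitly that $\al+\mu\geq\boldsymbol{c}$ already forces $\al+\mu\neq\boldsymbol{0}$ (since $\boldsymbol{c}\in M$ has all components $\geq 1$), so membership in $M$ is automatic.
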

    
Proposition \ref{tipo definito} shows that, if $S$ is an almost symmetric local good semigroup, then $M-M$ is a good relative ideal of $S$, because it is the union of a good relative ideal and $\D(\g)$. Hence, in this case, $M-M$ is a (not necessarily local) good semigroup.

\medskip
In order to prove Theorem \ref{main}, we firstly need the following.

\begin{prop}\label{miracolo}
Let $S$ be a local good semigroup, of multiplicity vector $\boldsymbol{e}$, such that $M-M$ is a good semigroup. Then the standard canonical ideal of $M-M$ is $$K(M-M)=(K(S)-(M-M))-\boldsymbol{e}.$$
\end{prop}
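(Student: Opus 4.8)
The plan is to verify that $K':=(K(S)-(M-M))-\boldsymbol{e}$ satisfies the duality criterion of Lemma~\ref{Lemma tozzo} for the good semigroup $T:=M-M$, and then to pin down its translation class so that it coincides with the \emph{standard} canonical ideal $K(T)$. First I would record that $T$ is a good relative ideal of $S$ (it is a relative ideal of $S$ by the preliminaries, and it satisfies \bff{(G1)}, \bff{(G2)} because it is assumed to be a good semigroup), so Lemma~\ref{ideale buono} gives that $K(S)-T$ is a good relative ideal of $S$. Translating by $-\boldsymbol{e}$ keeps it good, and one checks directly that it is a relative ideal of $T$: if $\beta+T\cont K(S)$ and $\alpha\in T$, then $(\alpha+\beta)+T\cont\beta+T\cont K(S)$.

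The engine of the proof is the identity $K'-I=(K(S)-I)-\boldsymbol{e}$ for every relative ideal $I$ of $T$. This holds because $I+T=I$ (as $\boldsymbol{0}\in T$ and $T+I\cont I$), so $\alpha+I\cont K'$ is equivalent to $\alpha+\boldsymbol{e}+I+T\cont K(S)$, i.e. to $\alpha+\boldsymbol{e}\in K(S)-I$. Applying this twice, together with the translation rule $K(S)-(E-\boldsymbol{e})=(K(S)-E)+\boldsymbol{e}$, I obtain
$$K'-(K'-I)=\bigl(K(S)-\bigl((K(S)-I)-\boldsymbol{e}\bigr)\bigr)-\boldsymbol{e}=K(S)-(K(S)-I).$$
Every good relative ideal $I$ of $T$ is also a good relative ideal of $S$ (goodness is intrinsic, and $S\cont T$ forces the ideal conditions over $S$), so the duality of the canonical ideal $K(S)$ yields $K(S)-(K(S)-I)=I$. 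Hence $K'-(K'-I)=I$ for every good relative ideal $I$ of $T$, and Lemma~\ref{Lemma tozzo} applied to $T$ shows $K'$ is a canonical ideal of $T$; thus $K'=K(T)+\boldsymbol{x}$ for some $\boldsymbol{x}\in\Z^h$.

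It remains to show $\boldsymbol{x}=\boldsymbol{0}$. Since a good relative ideal is bounded below and closed under $\wedge$, it has a minimum element, and minima shift under translation; as $\min K(T)=\boldsymbol{0}$ (because $T\cont K(T)\cont\N^h$), it suffices to prove $\min K'=\boldsymbol{0}$. The inclusion $\boldsymbol{0}\in K'$ amounts to $\boldsymbol{e}+T\cont K(S)$: using $T\cont K(S)\cup\D(\ga)$, an element $t\in T\cap K(S)$ gives $\boldsymbol{e}+t\in\boldsymbol{e}+K(S)\cont K(S)$ (as $\boldsymbol{e}\in S$), while an element $t\in T\cap\D(\ga)$ satisfies $\boldsymbol{e}+t\geq\boldsymbol{e}+\ga\geq\boldsymbol{c}$ (since $\boldsymbol{e}\geq\boldsymbol{1}$ in the local case), hence lies in $\boldsymbol{c}+\N^h\cont S\cont K(S)$.

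The main obstacle is the reverse bound $\min K'\geq\boldsymbol{0}$, equivalently $K'\cont\N^h$, equivalently $K(S)-(M-M)\cont\boldsymbol{e}+\N^h$. This is the delicate point: it asserts that $\boldsymbol{e}$ is exactly the minimum of the good relative ideal $K(S)-T$, and I expect it to require a careful analysis of the boundary of $K(S)$ together with the structure (and possible non-locality) of $T$, rather than a formal manipulation. An alternative route is to compare conductors: one has $\boldsymbol{c}_{K'}=\boldsymbol{c}_{K(S)}-\boldsymbol{e}$ and $\boldsymbol{c}_{K(T)}=\boldsymbol{c}_{T}$, so it would be enough to establish $\boldsymbol{c}_{K(S)}=\boldsymbol{c}$ and $\boldsymbol{c}_{T}=\boldsymbol{c}-\boldsymbol{e}$; but the sharpness of these conductor equalities is the same difficulty in another guise. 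Either way, once $\min K'=\boldsymbol{0}$ is secured, $\boldsymbol{x}=\boldsymbol{0}$ and $K(M-M)=K'$.
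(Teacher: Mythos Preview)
Your argument that $K':=(K(S)-T)-\boldsymbol{e}$ is a canonical ideal of $T=M-M$ is correct and is essentially the paper's argument with the translation by $-\boldsymbol{e}$ carried along from the start; both reduce to the $K(S)$-duality via $I+T=I$. Your verification of $\boldsymbol{0}\in K'$ is also fine, though the paper does it more directly: $T+\boldsymbol{e}=(M-M)+\boldsymbol{e}\subseteq M$ and $M+T\subseteq M\subseteq K(S)$, hence $T+\boldsymbol{e}\subseteq K(S)-T$.

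The genuine gap is exactly where you flag it: the inclusion $K'\subseteq\N^h$, i.e.\ $K(S)-T\subseteq\boldsymbol{e}+\N^h$. You suspect this needs a delicate boundary analysis, but in fact the paper closes it by a short formal duality trick. First show $S-\N^h\subseteq T+\boldsymbol{e}$: if $\al\in S-\N^h$ and $\de\in M$, then $\de-\boldsymbol{e}\in\N^h$ (as $\boldsymbol{e}=\min M$), so $\al+(\de-\boldsymbol{e})\in S$; moreover $\al\neq\boldsymbol{0}$ since $S\subsetneq\N^h$, and $\al+(\de-\boldsymbol{e})\geq\al$ forces this element to be nonzero, hence in $M$; thus $\al-\boldsymbol{e}\in T$. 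Next observe $K(S)-\N^h=S-\N^h$, because $K(S)+\N^h=\N^h$ (as $\boldsymbol{0}\in K(S)\subseteq\N^h$) and $S-\N^h=(K(S)-K(S))-\N^h=K(S)-(K(S)+\N^h)$. Combining, $K(S)-\N^h\subseteq T+\boldsymbol{e}$; dualizing with $K(S)$ (and using that $\N^h$ is a good relative ideal of $S$, so $K(S)-(K(S)-\N^h)=\N^h$) yields
\[
(K(S)-T)-\boldsymbol{e}=K(S)-(T+\boldsymbol{e})\subseteq K(S)-(K(S)-\N^h)=\N^h,
\]
which is exactly the missing inclusion. So no conductor comparisons or boundary arguments are needed; the step you thought would be hardest is in the end another instance of the same $K(S)$-duality you already used.
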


\begin{proof}
For the sake of simplicity we set $T=M-M$ and $K=K(S)$. Our aim is to show that $K(T)=(K-T)-\boldsymbol{e}$.

Firstly we prove that $K-T$ is a canonical ideal of $T$. 
In fact,  $K-T$ is not only a good relative ideal of $S$, but also of $T$, because 
if $\al, \de \in T=M-M$ and $\be \in K-T$, we get $(\al+\be)+\de =(\al+\de)+\be \in T+(K-T) \subseteq K$, that is $\al+\be \in K-T$. 
By Lemmas \ref{Lemma tozzo} and \ref{ideale buono}, it is sufficient to show that, for every good relative ideal $E$ of $T$, it holds $(K-T)-((K-T)-E)=E$.
Using the fact that for any three relative ideals $A,B,C$ it holds that
$(A-B)-C=A-(B+C)$, we obtain
$$(K-T)-((K-T)-E)=(K-T)-(K-(T+E)).$$ Since $E$ and $K-E$ are both relative ideals of $T$, then $T+E=E$ and $T+(K-E)=K-E$, hence we get
        \begin{align*} 
& (K-T)-(K-(T+E))=(K-T)-(K-E) \\ 
= & K-(T+(K-E))=K-(K-E).
\end{align*}
Being $E$ a good relative ideal of $T \supset S$, it is also a good relative ideal of $S$, therefore $K-(K-E)=E$, again by Lemma \ref{Lemma tozzo}.

It remains to prove that
$T \cont (K-T)-\boldsymbol{e} \cont \N^d$. As for the first inclusion, this is equivalent to saying that $T+\boldsymbol{e} \subseteq K-T$. But $T+\boldsymbol{e}=(M-M)+\boldsymbol{e}
\subseteq M$ and $M+T=M+(M-M) \subseteq M \subset K$; thus $T+\boldsymbol{e} \subseteq M\subseteq K-T$, as desired.

We prove now the second inclusion, $(K-T)-\boldsymbol{e} \cont \N^d$. First of all, we show that $S - \N^h \cont T+\boldsymbol{e}$. Let $\al \in S- \N^h$; we have to prove that $\al-\boldsymbol{e} \in T$. This is equivalent to proving that for any $\de \in M$, $\al-\boldsymbol{e}+\de \in M$. But $\de - \boldsymbol{e} \geq 0$, i.e. $\de - \boldsymbol{e} \in \N^h$, hence $\al+(\de-\boldsymbol{e}) \in S$. Since $\boldsymbol{0}\notin S- \N^d$, because $S\subsetneq \N^h$, and $\al+(\de-\boldsymbol{e})
\geq \al$, we must have $\al+\de-\boldsymbol{e} \in M$.

Finally notice that $K- \N^h=S- \N^h$: indeed, $\boldsymbol{0} \in K$ and so $K+\N^h=\N^h$; therefore $S-\N^h=(K-K)-\N^h=K-(K+\N^h)=K- \N^h$. Hence $K- \N^h \cont T+\boldsymbol{e}$ and
dualizing with $K$ we obtain $K-(T+\boldsymbol{e}) \cont \N^d$, that is our claim.
\end{proof}

We observe that the fact that $K(S)-(M-M)$ is a canonical ideal of $M-M$
can be deduced by \cite[Corollary 5.2.12]{KST}, where the proof uses an explicit
computation of $K-E$ for any relative ideal $E$ of $S$. We included our proof
of this part in the above proposition, for the sake of completeness.

\medskip
We now use Proposition \ref{miracolo} to prove Theorem \ref{main}
\begin{thm}
\label{main}
A local good semigroup $S$ is almost symmetric if and only if $M-M$ is a good relative ideal of $S$ and $M-\boldsymbol{e}$ is the standard canonical ideal of $M-M$.
\end{thm}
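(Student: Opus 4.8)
The plan is to write $T:=M-M$ and $K:=\K(S)$ and to funnel both implications through the single identity $K-T=M$. In each direction I would first arrange that $T$ is a good semigroup, so that Proposition \ref{miracolo} applies and gives $\K(T)=(K-T)-\boldsymbol{e}$; since translating by $\boldsymbol{e}$ is a bijection, the clause ``$M-\boldsymbol{e}$ is the standard canonical ideal of $T$'' is then equivalent to $K-T=M$. The theorem thus reduces to proving that $S$ is almost symmetric if and only if $T$ is a good relative ideal of $S$ and $K-T=M$.

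For the direct implication I would start from Proposition \ref{tipo definito}, which gives $K\cup\D(\ga)=M-M=T$. In particular $T$ is the union of the good relative ideal $K$ and $\D(\ga)$, hence (by the remark following Proposition \ref{tipo definito}) a good relative ideal of $S$ and a good semigroup, establishing the first clause; moreover $K\cont T$. The inclusion $M\cont K-T$ holds for any local good semigroup: for $\al\in M$ and $\de\in T$ one has $\al+\de\in M\cont K$. For the reverse inclusion I take $\al\in K-T$; from $K\cont T$ we get $\al+K\cont \al+T\cont K$, that is $\al\in K-K=S$. It then remains to exclude $\al=\boldsymbol{0}$, i.e. to show $\boldsymbol{0}\notin K-T$, equivalently $T\not\cont K$. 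This is where locality enters: for every $\be\in\D(\ga)$ one checks $\boldsymbol{0}\in\D^S(\ga-\be)$ (the only $S$-element with a vanishing coordinate), so $\be\notin K$; as $\D(\ga)\neq\varnothing$ when $h\geq2$, we conclude $T\setminus K=\D(\ga)\neq\varnothing$, hence $\boldsymbol{0}\notin K-T$. Therefore $K-T=M$.

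For the converse, the hypothesis that $T$ is a good relative ideal of $S$ makes it a good semigroup, so Proposition \ref{miracolo} applies and the assumption $\K(T)=M-\boldsymbol{e}$ gives $K-T=M$. Dualizing with the canonical ideal $K$ through the bidual property of Lemma \ref{Lemma tozzo} (legitimate since $T$ is a good relative ideal of $S$) yields $K-M=K-(K-T)=T$. Finally, since $M\cont S=K-K$ we have $K+M\cont K$, so every $k\in K$ satisfies $k+M\cont K$, i.e. $k\in K-M=T=M-M$; thus $K\cont M-M$, which is exactly $K+M\cont M$, the almost symmetry of $S$.

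The step I expect to be the main obstacle is the identity $K-T=M$, and within it the verification that $\boldsymbol{0}\notin K-T$: this is the point where locality is used in an essential way. The remaining work is careful bookkeeping with the duality formulas $K-K=S$, $K-(K-I)=I$, and $(A-B)-C=A-(B+C)$, all already available from Lemma \ref{Lemma tozzo} and the listed properties of canonical ideals.
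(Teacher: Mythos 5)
Your proof is correct, and while your converse direction coincides with the paper's, your forward direction takes a genuinely different route. In the converse, both you and the paper use Proposition \ref{miracolo} to turn the hypothesis $\K(T)=M-\boldsymbol{e}$ into $\K(S)-T=M$, dualize via Lemma \ref{Lemma tozzo} to get $\K(S)-M=T$, and conclude $\K(S)\cont T$, i.e.\ almost symmetry. For the forward implication, however, the paper does \emph{not} invoke Proposition \ref{miracolo} at all: it proves a standalone claim (every good relative ideal $I$ of $T=M-M$ satisfies $\K(S)+I=I$ and is bidual as a relative ideal of $S$), uses it to show $M-I=S-I$ and then $M-(M-I)=I$ for every good relative ideal $I$ of $T$, so that $M$ is a canonical ideal of $T$ by the duality criterion of Lemma \ref{Lemma tozzo}, and finally checks $T\cont M-\boldsymbol{e}\cont\N^h$ by hand to identify the \emph{standard} canonical ideal. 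You instead prove the explicit identity $\K(S)-T=M$ directly from almost symmetry: $M\cont \K(S)-T$ by ideal manipulations, $\K(S)-T\cont S$ from $\K(S)\cont T$ together with $\K(S)-\K(S)=S$, and $\boldsymbol{0}\nin \K(S)-T$ because $\D(\ga)\cont T$ (Proposition \ref{tipo definito}) while $\D(\ga)\cap\K(S)=\varnothing$ (since $\boldsymbol{0}\in\D^S(\ga-\be)$ for every $\be\in\D(\ga)$); then Proposition \ref{miracolo} converts $\K(S)-T=M$ into $\K(T)=M-\boldsymbol{e}$. All of these steps are sound. Your route is more economical and symmetric --- both implications funnel through Proposition \ref{miracolo} and the single identity $\K(S)-T=M$, and the sandwich check $T\cont M-\boldsymbol{e}\cont\N^h$ is absorbed into that proposition --- whereas the paper's longer argument yields by-products of independent interest (biduality over $S$ of all good relative ideals of $T$, and $M-I=S-I$) and exhibits $M$ as canonical without ever computing $\K(T)$. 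One small correction of emphasis: locality is not what makes the step $\boldsymbol{0}\nin\K(S)-T$ work (the membership $\boldsymbol{0}\in\D^S(\ga-\be)$ needs only $\boldsymbol{0}\in S$); locality enters instead through Proposition \ref{tipo definito}, which supplies $\D(\ga)\cont T$, and through Proposition \ref{miracolo} itself.
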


\begin{proof}
For the sake of simplicity we set $K=K(S)$, $T=M-M$ and $K'=K(T)$.
\begin{description}
\item[$(\Leftarrow)$] Since the inclusion $T \cont  \K \cup \Delta (\ga)$ holds for every good semigroup,  it is sufficient to show that $K \cont T$. By assumption $K'=M-\boldsymbol{e}$, and by Proposition \ref{miracolo} $K'=(K-T)-\boldsymbol{e}$. Hence $M=K-T$, that, dualizing with $K$, is equivalent to $K-M=K-(K-T)=T$. But since $K$ is a relative ideal of $S$, we have $K \cont K-M=T$, as desired.
\item[$(\Rightarrow)$] We have already noticed that $S$ almost symmetric implies that $T$ is good.
In order to prove the second assertion we firstly need the follo\-wing:

\smallskip
    \noindent \textit{Claim: every good relative ideal $I$ of $T$ is bidual as an ideal of $S$.}\\
    Let $I$ be a good relative ideal of $T$. We have $\K+I \cont T + I = I$,
    where the first inclusion is true because $S$ is almost symmetric and the second one because $I$ is an ideal of $T$. Conversely, $I \cont \K+I$ because $0 \in \K$. Hence
    \begin{equation}
    \K+I=I.
    \end{equation}
        Now suppose that $I$ is not bidual as ideal of $S$, then 
    $I \subsetneq S-(S-I) \cont \K-(S-I)$. Dualizing with $K$, this is equivalent to 
    $S-I \subsetneq \K-I$. 
    Therefore 
    $$S-I \subsetneq \K-I = \K-(\K+I)=(\K-\K)-I = S-I$$ 
    that is a contradiction. Hence the claim holds true.
    
We are now ready to prove that $M$ is a canonical ideal of $T$.
By Lemma \ref{Lemma tozzo}, it is sufficient to show that $M-(M-I) = I$ for every good ideal $I$ of $T$. Firstly, we show that $M-I=S-I$. If not, there exists an element $x \in S-I$ such that $x \nin M-I$. Then $x+I$ is an ideal of $S$ not contained in $M$, i.e.
$x+I=S$; and this means that $S$ is itself a relative ideal of $T$, a contradiction. Hence $M-I=S-I$. 

Since, by the claim, $I$ is bidual as an ideal of $S$, $$M-(M-I) \cont S-(M-I)=S-(S-I)=I. $$ The other inclusion is always true, hence we get  $M-(M-I) = I$, as desired. 
Finally, it is straightforward to check that $T\subseteq M-\boldsymbol{e}\subseteq \N^h$,
hence $M-\boldsymbol{e}$ is the standard canonical ideal of $T$. 
\end{description} 
\end{proof}

\begin{ex} Let $S$ be the semigroup represented by the black dots in the figure on the left;
then its standard canonical ideal $K(S)$ is obtained by adding to $S$ the white dots in the same figure, and it is not difficult to check that $M+K(S) \subseteq M$, i.e. $S$
is almost symmetric. In the figure on the right, $M-M$ is depicted with black dots and $M-\boldsymbol{e}$ is obtained by adding the white dots to it. As prescribed by the previous theorem, $M-\boldsymbol{e}$ is the standard canonical ideal of $M-M$.

\begin{center}
\begin{tikzpicture}[scale=0.5]
    \coordinate (Origin)   at (0,0);
    \coordinate[label=below:0] (zero) at (-0.6,-0.2);
    \coordinate (XMin) at (-1,0);
    \coordinate (XMax) at (11,0);
    \coordinate (YMin) at (0,-1);
    \coordinate (YMax) at (0,11);
    \draw [-latex] (XMin) -- (XMax);
    \draw [-latex] (YMin) -- (YMax);

    \node[draw,circle,inner sep=1.2pt,fill] at (0,0) {};
    \node[draw,circle,inner sep=1.2pt,fill] at (3,3) {};
    \node[draw,circle,inner sep=1.2pt,fill] at (3,4) {};
    \node[draw,circle,inner sep=1.2pt,fill] at (4,3) {};
    \node[draw,circle,inner sep=1.2pt,fill] at (4,4) {};
    
     \foreach \x in {1,...,9}{
     \coordinate[label=below:\x] (\x) at (\x,-0.2);
         \coordinate[label=left:\x] (\x') at (-0.2,\x);
         \coordinate (start\x) at (\x,-0.1);
         \coordinate ('start\x) at (-0.1,\x);
         \coordinate (end\x) at (\x,10);
         \coordinate ('end\x) at (10,\x);
     }
    
     \foreach \x in {6,...,9}{
      \foreach \y in {6,...,9}{
     \node[draw,circle,inner sep=1.2pt,fill] at (\x,\y) {};
     }
     }
     
     \foreach \x in {3,4,5}{
         \node[draw,circle,inner sep=1.2pt,fill=white] at (\x,5) {};
      \node[draw,circle,inner sep=1.2pt,fill=white] at (5,\x) {};
     }
     
     \foreach \x in {3,4}{
     \foreach \y in {6,...,9}{
     \node[draw,circle,inner sep=1.2pt,fill] at (\x,\y) {};
     \node[draw,circle,inner sep=1.2pt,fill] at (\y,\x) {};
     }
     }
     
    \foreach \x in {3,...,9}{
    \node[draw,circle,inner sep=1.2pt,fill=white] at (\x,0) {};
    \node[draw,circle,inner sep=1.2pt,fill=white] at (0,\x) {};
     }
 
\begin{scope}[on background layer]  
\foreach \x in {1,...,9}{
        \draw [thin, gray] (start\x) -- (end\x);
        \draw [thin, gray] ('start\x) -- ('end\x);}
\end{scope}
\end{tikzpicture} \quad \quad
\begin{tikzpicture}[scale=0.5]
    \coordinate (Origin)   at (0,0);
    \coordinate[label=below:0] (zero) at (-0.6,-0.2);
    \coordinate (XMin) at (-1,0);
    \coordinate (XMax) at (11,0);
    \coordinate (YMin) at (0,-1);
    \coordinate (YMax) at (0,11);
    \draw [-latex] (XMin) -- (XMax);
    \draw [-latex] (YMin) -- (YMax);

    \node[draw,circle,inner sep=1.2pt,fill] at (0,0) {};
    
     \foreach \x in {1,...,9}{
     \coordinate[label=below:\x] (\x) at (\x,-0.2);
         \coordinate[label=left:\x] (\x') at (-0.2,\x);
         \coordinate (start\x) at (\x,-0.1);
         \coordinate ('start\x) at (-0.1,\x);
         \coordinate (end\x) at (\x,10);
         \coordinate ('end\x) at (10,\x);
     }
    
     \foreach \x in {3,...,9}{
     \foreach \y in {3,...,9}{
     \node[draw,circle,inner sep=1.2pt,fill] at (\x,\y) {};
     }
     }
     
     \foreach \x in {1}{
     \foreach \y in {3,...,9}{
     \node[draw,circle,inner sep=1.2pt,fill=white] at (\x,\y) {};
     \node[draw,circle,inner sep=1.2pt,fill=white] at (\y,\x) {};
     \node[draw,circle,inner sep=1.2pt,fill] at (\x-1,\y) {};
     \node[draw,circle,inner sep=1.2pt,fill] at (\y,\x-1) {};
     }
     }

\node[draw,circle,inner sep=1.2pt,fill=white] at (1,1) {};
\node[draw,circle,inner sep=1.2pt,fill=white] at (1,0) {};
\node[draw,circle,inner sep=1.2pt,fill=white] at (0,1) {};
   
\begin{scope}[on background layer]  
\foreach \x in {1,...,9}{
        \draw [thin, gray] (start\x) -- (end\x);
        \draw [thin, gray] ('start\x) -- ('end\x);}
\end{scope}
\end{tikzpicture}
\end{center}
\end{ex}

The following result was proved in \cite{DGM2} for the special case $h=2$. We will give a proof that holds in general, using Theorem \ref{main}.

\begin{cor}\label{alm-symm-good}
A local good semigroup $S$ is almost symmetric of maximal embedding dimension if and only if $M-M$ is a symmetric good semigroup.
\end{cor}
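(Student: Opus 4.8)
The plan is to translate both conditions on $S$ into statements about $T:=M-M$ and its standard canonical ideal, and then to observe that their conjunction is precisely the symmetry of $T$. By definition, $S$ is of maximal embedding dimension exactly when $T=M-\boldsymbol{e}$, while by Theorem \ref{main} $S$ is almost symmetric exactly when $T$ is a good semigroup and $\K(T)=M-\boldsymbol{e}$; since a good semigroup $T$ is symmetric if and only if $T=\K(T)$, the whole corollary reduces to the equivalence: $T$ is a symmetric good semigroup if and only if $T$ is good and $\K(T)=T=M-\boldsymbol{e}$. I would record at the outset the two ``free'' inclusions valid for every local good semigroup, namely $T+\boldsymbol{e}\subseteq M$ (because $\boldsymbol{e}\in M$ and $T+M\subseteq M$), equivalently $T\subseteq M-\boldsymbol{e}$, and $M\subseteq \K(S)-T$ (established inside the proof of Proposition \ref{miracolo}).

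For the direction $(\Rightarrow)$, assume $S$ is almost symmetric of maximal embedding dimension. Then $T$ is good and $\K(T)=M-\boldsymbol{e}$ by Theorem \ref{main}, while $T=M-\boldsymbol{e}$ by maximal embedding dimension; combining these gives $\K(T)=M-\boldsymbol{e}=T$, so $T$ is a symmetric good semigroup. This direction is immediate once the reformulation above is in place.

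The substantive direction is $(\Leftarrow)$. Assume $T$ is a symmetric good semigroup, so $T$ is a good semigroup and $\K(T)=T$. Since $T$ is a good semigroup, Proposition \ref{miracolo} applies and yields $\K(T)=(\K(S)-T)-\boldsymbol{e}$. I would then sandwich $T$ between $M-\boldsymbol{e}$ from both sides: the inclusion $T\subseteq M-\boldsymbol{e}$ is one of the free inclusions above, while translating $M\subseteq \K(S)-T$ by $-\boldsymbol{e}$ gives $M-\boldsymbol{e}\subseteq (\K(S)-T)-\boldsymbol{e}=\K(T)=T$, where the last equality is symmetry. Hence $T=M-\boldsymbol{e}$, which is exactly maximal embedding dimension; and then $\K(T)=T=M-\boldsymbol{e}$ together with $T$ being good lets Theorem \ref{main} conclude that $S$ is almost symmetric.

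The main obstacle is the single inclusion $M-\boldsymbol{e}\subseteq T$, which is where symmetry is genuinely used and which turns the always-valid inclusion $T\subseteq M-\boldsymbol{e}$ into an equality; its proof hinges entirely on the explicit formula for $\K(T)$ from Proposition \ref{miracolo} and on the auxiliary inclusion $M\subseteq \K(S)-T$ extracted from that proof. Everything else is bookkeeping with the equivalences ``maximal embedding dimension $\Leftrightarrow T=M-\boldsymbol{e}$'' and Theorem \ref{main}.
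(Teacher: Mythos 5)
Your proof is correct, and it uses the same two key ingredients as the paper (Theorem \ref{main} and Proposition \ref{miracolo}), but it organizes the nontrivial direction $(\Leftarrow)$ differently. The paper first proves almost symmetry directly from the definition: writing $T=\K(T)=(\K(S)-T)-\boldsymbol{e}=\K(S)-(T+\boldsymbol{e})$ and using $T+\boldsymbol{e}\subseteq M\subseteq S$ together with $\K(S)-S=\K(S)$, it deduces $\K(S)\subseteq T$, hence $\K(S)+M\subseteq T+M\subseteq M$; only then does it invoke the forward direction of Theorem \ref{main} to identify $\K(T)$ with $M-\boldsymbol{e}$ and conclude maximal embedding dimension from symmetry. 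You instead derive maximal embedding dimension first, by sandwiching: $T\subseteq M-\boldsymbol{e}$ always, and $M-\boldsymbol{e}\subseteq(\K(S)-T)-\boldsymbol{e}=\K(T)=T$ using the inclusion $M\subseteq\K(S)-T$ (which, as you note, is established inside the proof of Proposition \ref{miracolo} and indeed holds unconditionally, since $M+T\subseteq M\subseteq\K(S)$); you then obtain almost symmetry from the \emph{converse} direction of Theorem \ref{main}. The two routes are dual to one another --- your inclusion $M+T\subseteq\K(S)$ is equivalent to the paper's $T\subseteq\K(S)-M$ --- and cost the same amount of work. What your version buys is that the definition of almost symmetry ($\K(S)+M\subseteq M$) never has to be unwound by hand, since Theorem \ref{main} supplies it; the mild price is that you lean on an inclusion extracted from the interior of Proposition \ref{miracolo}'s proof rather than on its statement alone, which is legitimate but worth flagging explicitly as you did.
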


 \begin{proof} As usual, we set $K=K(S)$ and $T=M-M$.
 \begin{description}
 \item[$(\Rightarrow)$] If $S$ is almost symmetric,  by Theorem \ref{main}, $M-\boldsymbol{e}$ is the standard cano\-nical ideal of $T$. Remembering that, by definition, $S$ is of maximal embedding dimension if and only if $T=M-\boldsymbol{e}$, we get that $T$ coincides with its standard canonical ideal, that is equivalent to saying that $T$ is symmetric.
 \item[$(\Leftarrow)$] Assume that $T$ is good and symmetric. Therefore, by Proposition \ref{miracolo}, $T=K(T)=(K-T)-\boldsymbol{e}$.
It follows that $T=K-(T+\boldsymbol{e})\supseteq K-M \supseteq K-S=K$.
Hence $K+M \subseteq M$, which means that is $S$ almost symmetric. 

Now we can apply Theorem \ref{main} to get $\K(T)=M-\boldsymbol{e}$;
therefore, since $T$ is symmetric, $T=K(T)$, i.e. $M-M=M-\boldsymbol{e}$, that means that 
$S$ is of maximal embedding dimension.
\end{description}

\end{proof}

\begin{ex} \label{2.7}
Let $S$ be the semigroup represented by the black dots in the figure on the left; then $M-M$ is the semigroup represented by the black dots in the figure on the right. In this case, $M-M$ coincides with $M-\boldsymbol{e}$ and it is a symmetric good
semigroup. Hence, by the previous corollary, $S$ is almost symmetric of maximal embedding dimension.
The standard canonical ideal $K(S)$ is obtained by adding to $S$ the white dots in the figure on the left, and it is immediately seen that $K(S)\cup \Delta(\boldsymbol{\gamma})=M-M$: this is another way to check that $S$ is almost symmetric.

\begin{center}
\begin{tikzpicture}[scale=0.65]
    \coordinate (Origin)   at (0,0);
    \coordinate[label=below:0] (zero) at (-0.6,-0.2);
    \coordinate (XMin) at (-1,0);
    \coordinate (XMax) at (8,0);
    \coordinate (YMin) at (0,-1);
    \coordinate (YMax) at (0,8);
    \draw [-latex] (XMin) -- (XMax);
    \draw [-latex] (YMin) -- (YMax);

    \node[draw,circle,inner sep=1.7pt,fill] at (0,0) {};
    \node[draw,circle,inner sep=1.7pt,fill=white] at (1,1) {};
    \node[draw,circle,inner sep=1.7pt,fill] at (2,2) {};
    \node[draw,circle,inner sep=1.7pt,fill] at (3,3) {};
    
     \foreach \x in {1,...,6}{
     \coordinate[label=below:\x] (\x) at (\x,-0.2);
         \coordinate[label=left:\x] (\x') at (-0.2,\x);
         \coordinate (start\x) at (\x,-0.1);
         \coordinate ('start\x) at (-0.1,\x);
         \coordinate (end\x) at (\x,7);
         \coordinate ('end\x) at (7,\x);
     }
     
     \foreach \x in {4,...,6}{
     \foreach \y in {4,...,6}{
     \node[draw,circle,inner sep=1.7pt,fill] at (\x,\y) {};
     }}

     \foreach \x in {3,...,6}{
     \node[draw,circle,inner sep=1.7pt,fill=white] at (\x,2) {};
     \node[draw,circle,inner sep=1.7pt,fill=white] at (2,\x) {};
     }
   
\begin{scope}[on background layer]  
\foreach \x in {1,...,6}{
        \draw [thin, gray] (start\x) -- (end\x);
        \draw [thin, gray] ('start\x) -- ('end\x);}
\foreach \x in {1}{
        \draw [thin, gray] (start\x) -- (end\x);
        \draw [thin, gray] ('start\x) -- ('end\x);}
\end{scope}
\end{tikzpicture} \quad \quad
\begin{tikzpicture}[scale=0.65]
    \coordinate (Origin)   at (0,0);
    \coordinate[label=below:0] (zero) at (-0.6,-0.2);
    \coordinate (XMin) at (-1,0);
    \coordinate (XMax) at (8,0);
    \coordinate (YMin) at (0,-1);
    \coordinate (YMax) at (0,8);
    \draw [-latex] (XMin) -- (XMax);
    \draw [-latex] (YMin) -- (YMax);

    \node[draw,circle,inner sep=1.7pt,fill] at (0,0) {};
    \node[draw,circle,inner sep=1.7pt,fill] at (1,1) {};
    
     \foreach \x in {1,...,6}{
     \coordinate[label=below:\x] (\x) at (\x,-0.2);
         \coordinate[label=left:\x] (\x') at (-0.2,\x);
         \coordinate (start\x) at (\x,-0.1);
         \coordinate ('start\x) at (-0.1,\x);
         \coordinate (end\x) at (\x,7);
         \coordinate ('end\x) at (7,\x);
     }
     
     \foreach \x in {2,...,6}{
     \foreach \y in {2,...,6}{
     \node[draw,circle,inner sep=1.7pt,fill] at (\x,\y) {};
     }}
   
\begin{scope}[on background layer]  
\foreach \x in {1,...,6}{
        \draw [thin, gray] (start\x) -- (end\x);
        \draw [thin, gray] ('start\x) -- ('end\x);}
\foreach \x in {1}{
        \draw [thin, gray] (start\x) -- (end\x);
        \draw [thin, gray] ('start\x) -- ('end\x);}
\end{scope}
\end{tikzpicture}
\end{center}
\end{ex}

\subsection{The non-local case}

If $S=S_1 \times \dots \times S_r$ is a non-local good semigroup, using the facts that $J=M_1 \times \dots \times M_r$, $J-J=(M_1-M_1) \times \dots \times (M_r-M_r)$ and
$K(S)=K(S_1)\times \dots \times K(S_r)$, it is straightforward to generalize 
the definitions of almost symmetry (as it has been done in \cite{a-u}) and the previous theorems. 
More precisely, we will say that $S$ is \emph{almost symmetric} if its components
are almost symmetric for every $i=1, \dots, r$. Moreover, denoting by $ \boldsymbol{e_i}$
the multiplicity vector of $S_i$, we can state the following results. Their proofs 
are straightforward since, if $E,F$ are relative ideals of $S$ of the form 
$E_1\times \dots \times E_r$ and $F_1\times \dots \times F_r$, where the supports of
$E_i$ and $F_i$ are the same as the supports of $S_i$, then 
$E-F=(E_1-F_1) \times \dots \times (E_r-F_r)$ and hence we can reduce our proofs to the local case.

\begin{prop}\label{miracolo2} Preserving the above notations,
let $S_1 \times \dots \times S_r$ be a good semigroup and let $\boldsymbol{e}=(\boldsymbol{e_1} , \dots , \boldsymbol{e_r})$ and assume that $J-J$ is a good semigroup. Then the standard canonical ideal of $J-J$ is $$K(J-J)=(K(S)-(J-J))-\boldsymbol{e}.$$
\end{prop}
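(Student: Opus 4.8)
The plan is to reduce everything to the local case already settled in Proposition \ref{miracolo}, exactly as the authors suggest in the paragraph preceding the statement. Write $S=S_1\times\dots\times S_r$ with each $S_i$ local, of maximal ideal $M_i$ and multiplicity $\boldsymbol{e_i}$, and set $T_i:=M_i-M_i$. By the lemma of Section 1 we have $J-J=T_1\times\dots\times T_r$. The hypothesis that $J-J$ is a good semigroup forces each factor $T_i$ to be a good semigroup as well: a direct product satisfies \bff{(G1)}, \bff{(G2)} and \bff{(G3)} if and only if each of its factors does, as one checks by embedding elements of a single factor into the product with fixed coordinates taken from the other factors, using that $\wedge$ and the conductor are computed componentwise. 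Consequently Proposition \ref{miracolo} applies to every $S_i$ and yields
\[
K(T_i)=\bigl(K(S_i)-T_i\bigr)-\boldsymbol{e_i}\qquad(i=1,\dots,r).
\]

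Next I would express both sides of the desired identity as direct products. On the left, since $J-J=T_1\times\dots\times T_r$ and the standard canonical ideal of a direct product is the product of the standard canonical ideals of the factors, one gets $K(J-J)=K(T_1)\times\dots\times K(T_r)$. On the right, I would use $K(S)=K(S_1)\times\dots\times K(S_r)$ together with the product formula for quotients of product ideals recalled just before the statement, namely $E-F=(E_1-F_1)\times\dots\times(E_r-F_r)$ when the supports of $E_i$ and $F_i$ coincide with that of $S_i$. Applying it to $E=K(S)$ and $F=J-J$ gives
\[
K(S)-(J-J)=\bigl(K(S_1)-T_1\bigr)\times\dots\times\bigl(K(S_r)-T_r\bigr).
\]
Finally, subtracting $\boldsymbol{e}=(\boldsymbol{e_1},\dots,\boldsymbol{e_r})$ is a single translation acting componentwise on a product, so $(K(S)-(J-J))-\boldsymbol{e}$ equals the product of the $(K(S_i)-T_i)-\boldsymbol{e_i}$. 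Comparing with the displayed local identities, this product is $K(T_1)\times\dots\times K(T_r)=K(J-J)$, which is the assertion.

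The routine but essential bookkeeping is to make sure the product-quotient formula is legitimately applicable: I must check that $K(S_i)$ and $T_i=M_i-M_i$ both live on the support of $S_i$, which is immediate from the chains $S_i\subseteq K(S_i)\subseteq\N^{h_i}$ and $S_i\subseteq M_i-M_i\subseteq\N^{h_i}$. The only genuinely nontrivial step is the passage from ``$J-J$ good'' to ``each $T_i$ good'', since only then is Proposition \ref{miracolo} available for the individual components; once that is in place, the argument is a purely formal combination of the three product identities (for $J-J$, for the standard canonical ideal, and for $E-F$) with the local result.
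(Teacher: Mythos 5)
Your proposal is correct and takes essentially the same route as the paper, which explicitly declares these non-local statements ``straightforward'' precisely because of the reduction you carry out: decompose $J-J=(M_1-M_1)\times\dots\times(M_r-M_r)$, use the product formulas for $K(\cdot)$ and for $E-F$, and apply Proposition~\ref{miracolo} componentwise. Your added verification that goodness of the product $J-J$ descends to each factor $M_i-M_i$ (needed to invoke Proposition~\ref{miracolo}) is a detail the paper leaves implicit, and you handle it correctly.
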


\begin{thm}\label{teorema centrale 3} A good semigroup $S$ is almost symmetric if and only if $J-J$ 
is a good relative ideal of $S$ and $J- \boldsymbol{e}$ 
is the standard canonical ideal of $J-J$.
\end{thm}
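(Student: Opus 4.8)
The plan is to deduce this non-local statement from the local Theorem~\ref{main} by exploiting that every object involved splits along the decomposition $S = S_1 \times \dots \times S_r$ into local good semigroups. By the very definition of almost symmetry in the non-local case, $S$ is almost symmetric if and only if each component $S_i$ is almost symmetric; so the whole biconditional follows by conjoining the local biconditionals over $i = 1, \dots, r$, provided the two right-hand conditions split as products as well. For this I would use the product identities recorded just before Proposition~\ref{miracolo2}: $J = M_1 \times \dots \times M_r$, $J - J = (M_1 - M_1) \times \dots \times (M_r - M_r)$, and the product rule $E - F = (E_1 - F_1) \times \dots \times (E_r - F_r)$ for relative ideals in product form with matching supports. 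Taking $F = \{\boldsymbol{e}\} = \{\boldsymbol{e_1}\} \times \dots \times \{\boldsymbol{e_r}\}$ in the last identity gives $J - \boldsymbol{e} = (M_1 - \boldsymbol{e_1}) \times \dots \times (M_r - \boldsymbol{e_r})$.

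First I would observe that ``$J - J$ is a good relative ideal of $S$'' is equivalent to ``$M_i - M_i$ is a good relative ideal of $S_i$ for every $i$'': since $J - J$ is the product of the $M_i - M_i$ on disjoint blocks of coordinates, properties \textbf{(G1)} and \textbf{(G2)} can be checked blockwise, the infimum being computed coordinatewise and a witness for \textbf{(G2)} being built blockwise from the factors. Assuming $J - J$ good, I would then write $K(J - J) = K(M_1 - M_1) \times \dots \times K(M_r - M_r)$, so that the equality $J - \boldsymbol{e} = K(J - J)$ becomes equivalent, factor by factor, to $M_i - \boldsymbol{e_i} = K(M_i - M_i)$ for every $i$. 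Theorem~\ref{main} applied to each $S_i$ now reads exactly: $S_i$ is almost symmetric if and only if $M_i - M_i$ is a good relative ideal of $S_i$ and $M_i - \boldsymbol{e_i} = K(M_i - M_i)$. Conjoining over $i$ and translating back through the three product identities yields the claimed equivalence. (Alternatively, one could bypass the reduction and repeat the proof of Theorem~\ref{main} verbatim with $J$ in place of $M$, invoking Proposition~\ref{miracolo2} instead of Proposition~\ref{miracolo}.)

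The step I expect to be the main obstacle is the product formula $K(T_1 \times \dots \times T_r) = K(T_1) \times \dots \times K(T_r)$ applied to $T_i = M_i - M_i$: the excerpt states it for the decomposition of a good semigroup into its \emph{local} components, whereas the factors $M_i - M_i$ need not be local. The formula does hold for an arbitrary factorization into a product of good semigroups, and I would justify it directly from the definition of the standard canonical ideal. The Frobenius vector of the product is the concatenation of the Frobenius vectors of the factors, and one checks that $\D^{T_1 \times \dots \times T_r}(\ga - \al) = \varnothing$ holds if and only if the analogous emptiness holds in each block: the forward implication restricts a putative witness to the block containing its distinguished coordinate, while the converse completes a blockwise witness to a global one by choosing, in each remaining block, an element above the conductor via \textbf{(G3)}. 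Once this is in place, every step above is a genuine equivalence and the reduction to the local case is complete.
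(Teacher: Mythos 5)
Your proposal is correct and follows essentially the same route the paper intends: the paper declares the proof ``straightforward'' by reduction to the local Theorem~\ref{main} via the product identities $J=M_1\times\dots\times M_r$, $J-J=(M_1-M_1)\times\dots\times(M_r-M_r)$ and $E-F=(E_1-F_1)\times\dots\times(E_r-F_r)$, which is exactly your componentwise argument. Your extra care in proving $K(T_1\times\dots\times T_r)=K(T_1)\times\dots\times K(T_r)$ for the possibly non-local factors $T_i=M_i-M_i$ (a case not literally covered by the paper's cited formula $K(S)=K(S_1)\times\dots\times K(S_r)$ for local components) is a correct and welcome filling-in of a detail the paper glosses over.
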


\begin{cor} A good semigroup $S$ is almost symmetric and  $J-J=J-\boldsymbol{e}$ if and only if $J-J$ is a symmetric good semigroup.
\end{cor}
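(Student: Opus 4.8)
The plan is to reduce the statement to the local Corollary~\ref{alm-symm-good}, exploiting the fact recorded just before Proposition~\ref{miracolo2} that all the relevant operations respect the decomposition $S = S_1 \times \dots \times S_r$. Write $T = J-J = (M_1-M_1)\times\dots\times(M_r-M_r)$ and recall $J = M_1\times\dots\times M_r$ and $\boldsymbol{e} = (\boldsymbol{e_1},\dots,\boldsymbol{e_r})$. Since the supports match, differences split as products, so in particular $J - \boldsymbol{e} = (M_1-\boldsymbol{e_1})\times\dots\times(M_r-\boldsymbol{e_r})$. By definition $S$ is almost symmetric exactly when every $S_i$ is, and the equality $J-J = J-\boldsymbol{e}$ holds if and only if $M_i-M_i = M_i-\boldsymbol{e_i}$ for every $i$, i.e. exactly when every $S_i$ has maximal embedding dimension.

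First I would translate the left-hand condition componentwise: $S$ is almost symmetric and $J-J = J-\boldsymbol{e}$ if and only if, for each $i$, the local good semigroup $S_i$ is almost symmetric of maximal embedding dimension. By Corollary~\ref{alm-symm-good}, this latter condition is equivalent to $M_i-M_i$ being a symmetric good semigroup, for every $i$. Finally, I would invoke the fact (stated in Section~1) that a good semigroup is symmetric if and only if each of its local components is symmetric: applied to $T = (M_1-M_1)\times\dots\times(M_r-M_r)$, this shows that $T$ is a symmetric good semigroup if and only if every factor $M_i-M_i$ is. Chaining the three equivalences yields the claim.

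Alternatively, one can mimic the proof of Corollary~\ref{alm-symm-good} verbatim, replacing $M$ by $J$, Theorem~\ref{main} by Theorem~\ref{teorema centrale 3}, and Proposition~\ref{miracolo} by Proposition~\ref{miracolo2}. For the implication $(\Leftarrow)$ the key computation is
\[
T = K(T) = (K-T)-\boldsymbol{e} = K-(T+\boldsymbol{e}) \supseteq K-J \supseteq K-S = K,
\]
where $K = K(S)$, the first equality uses symmetry of $T$, the third uses $(A-B)-C = A-(B+C)$, the first inclusion uses $(J-J)+\boldsymbol{e} \subseteq J$ together with order-reversal of the operation $F \mapsto K-F$, and the last equality uses $K-S = K$. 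Then $K \subseteq T = \{\al \mid \al + J \subseteq J\}$ forces $K+J\subseteq J$, i.e. $S$ is almost symmetric, after which Theorem~\ref{teorema centrale 3} gives $K(T) = J-\boldsymbol{e}$ and symmetry of $T$ gives $J-J = J-\boldsymbol{e}$.

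There is essentially no hard step here. The only points requiring care are the product identities, such as $J-\boldsymbol{e} = \prod_i (M_i-\boldsymbol{e_i})$, and the componentwise characterization of symmetry; but both are already available in the excerpt, so I expect the whole argument to be routine bookkeeping once the reduction to the local case is set up.
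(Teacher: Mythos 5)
Your primary argument is exactly the paper's intended proof: the authors state that these non-local results follow by reducing to the local case via the product decompositions $J-J=\prod_i(M_i-M_i)$, $J-\boldsymbol{e}=\prod_i(M_i-\boldsymbol{e_i})$, $K(S)=\prod_i K(S_i)$, and then invoking Corollary~\ref{alm-symm-good} componentwise together with the componentwise characterization of symmetry, which is precisely your chain of equivalences. Your bookkeeping (and your alternative mimicking of the local proof) is correct, so there is nothing to add.
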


\section{Almost Gorenstein rings}

We conclude this paper by applying the previous results to noetherian,
analytically unramified, residually rational, one-dimensional, reduced, semilocal rings.

As it is described in \cite{a-u}, under these hypotheses, the integral closure $\overline{R}$ of $R$ is a direct product
of discrete valuation rings $V_1 \times \dots \times V_h$, and therefore it is possible to associate with such a ring $R$
a value semigroup $v(R)$ that is a good subsemigroup of $\N^h$.
Note that the residually rational hypothesis means that all the residue fields of $R$
and $\overline{R}$ are isomorphic; if moreover the cardinality of these residue fields 
is bigger than $h$, then the connection between the ring and its value semigroup 
is particularly meaningful and we will assume these two hypotheses till the end of the paper. One of the reasons for this connection is
the fact that, if $I \subseteq J$ are two fractional ideals of $R$, then
the length $\ell(J/I)$ of $J/I$, as an $R$-module, can be computed looking at the value ideals $v(I)$ and $v(J)$ of $v(R)$; in particular, we will use the
fact that $I=J$ if and only if $v(I)=v(J)$ (see \cite[Corollary 2.5]{danna1}).
We also recall that algebroid curves belong to this class of rings
and that, in this case, the integer $h$ represents the number of branches of the singularity.

\medskip
If $R$ is semilocal with maximal ideals $\mathfrak m_1, \dots, \mathfrak m_j$
(with $j \leq h$), then $R \cong R_{\mathfrak m_1} \times \dots \times R_{\mathfrak m_j}$
(see \cite[Corollary 3.2]{a-u}). From this, it follows immediately that $v(R)=v(R_{\mathfrak m_1})\times \dots \times v(R_{\mathfrak m_j})$ is the unique representation of $v(R)$
as a direct product of local good semigroups. Moreover, the Jacobson radical
$J(R)$ coincides with $\mathfrak m_1R_{\mathfrak m_1}\times \dots \times \mathfrak m_jR_{\mathfrak m_j}$ and hence $v(J(R))=J$, where $J$ is the Jacobson ideal of $v(R)$.

\medskip
In this context, we give the following definition. 
\begin{defin}
A noetherian,
analytically unramified, one-dimensional, reduced local ring is said
to be \emph{almost Gorenstein} if $\mathfrak m \omega_R = \mathfrak m$,
where $\omega_R$ is a canonical ideal of $R$ with the property that 
$R\subseteq \omega_R \subseteq \overline{R}$.
 
If $R$ is semilocal,
following again \cite{a-u}, we will say that $R$ is almost Gorenstein if 
$R_{\mathfrak m_i}$ is almost Gorenstein for any maximal ideal $\mathfrak m_i$
of $R$. 
\end{defin}
In the local case, $(R, \mathfrak m)$ is almost Gorenstein if and only if $v(R)$ 
is almost symmetric and the Cohen-Macaulay type of $R$ equals the type of $v(R)$ 
(\cite[Proposition 3.7]{a-u}). In fact, setting $v(R)=S$ and $M=v(\mathfrak m)$, in that paper it is proved that if $R$ is almost Gorenstein then $v(\mathfrak m: \mathfrak m)=M-M=K(S) \cup \Delta (\boldsymbol{\gamma})$ and, therefore, we could restate that result saying that $(R,\mathfrak m)$
is almost Gorenstein if and only if $v(R)$ is almost symmetric
and $v(\mathfrak m: \mathfrak m)=M-M$
(notice that, in general, only the inclusion  $v(\mathfrak m: \mathfrak m)\subseteq M-M$ holds).
In light of Theorem \ref{main} we can further reformulate this fact in the following way: 

\begin{cor}\label{bdf} Preserving the above assumptions, set $S=v(R)$ and 
$M=v(\mathfrak m)$. Then  $(R,\mathfrak m)$
is almost Gorenstein if and only if $M-\boldsymbol{e}$ is the standard canonical 
ideal of $M-M$ and $v(\mathfrak m: \mathfrak m)=M-M$.
\end{cor}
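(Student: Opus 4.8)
The plan is to deduce the corollary directly from Theorem~\ref{main} together with the ring-theoretic reformulation of \cite[Proposition 3.7]{a-u} recorded just before the statement: namely, that $(R,\mathfrak m)$ is almost Gorenstein if and only if $S=v(R)$ is almost symmetric and $v(\mathfrak m:\mathfrak m)=M-M$. Granting this, the whole task reduces to trading the condition ``$S$ is almost symmetric'' for the condition ``$M-\boldsymbol e$ is the standard canonical ideal of $M-M$'', in the presence of the common hypothesis $v(\mathfrak m:\mathfrak m)=M-M$.

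For the forward implication I would assume $(R,\mathfrak m)$ almost Gorenstein. By the reformulation, $S$ is almost symmetric and $v(\mathfrak m:\mathfrak m)=M-M$. Applying Theorem~\ref{main} to the almost symmetric local good semigroup $S$ then gives at once that $M-M$ is a good relative ideal of $S$ and that $M-\boldsymbol e$ is its standard canonical ideal; the latter, combined with the already available equality $v(\mathfrak m:\mathfrak m)=M-M$, is exactly the conjunction to be proved.

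For the converse I would start from the hypotheses that $M-\boldsymbol e$ is the standard canonical ideal of $M-M$ and that $v(\mathfrak m:\mathfrak m)=M-M$. By Theorem~\ref{main}, $S$ is almost symmetric; combining this with $v(\mathfrak m:\mathfrak m)=M-M$ and invoking the reformulation once more yields that $(R,\mathfrak m)$ is almost Gorenstein.

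The only point requiring care---and the one I would flag as the main (if minor) obstacle---is checking that, in the converse, the single condition ``$M-\boldsymbol e$ is the standard canonical ideal of $M-M$'' already supplies the hypothesis ``$M-M$ is a good relative ideal of $S$'' needed to apply Theorem~\ref{main}. This is handled by noting that speaking of the standard canonical ideal of $M-M$ is meaningful only once $M-M$ is known to be a good semigroup, which, together with the standing fact that $M-M=S-M$ is always a relative ideal of $S$, furnishes precisely the required goodness.
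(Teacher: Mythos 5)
Your proof is correct and follows exactly the route the paper intends: the corollary is stated as an immediate consequence of the reformulation of \cite[Proposition 3.7]{a-u} (almost Gorenstein $\Leftrightarrow$ $S$ almost symmetric and $v(\mathfrak m:\mathfrak m)=M-M$) combined with Theorem~\ref{main}, which is precisely your argument. Your closing remark correctly resolves the only subtlety---that asserting ``$M-\boldsymbol e$ is the standard canonical ideal of $M-M$'' presupposes $M-M$ is a good semigroup, which, being always a relative ideal of $S$, is then a good relative ideal as Theorem~\ref{main} requires.
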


This result is not particularly useful from a computational point of view, since it is not easy at all to determine $\mathfrak m: \mathfrak m$ and its value semigroup. However, if we restrict ourselves to the 
maximal embedding dimension case, we get a stronger result.

\begin{prop}\label{bella} Preserving the above assumptions, set $S=v(R)$, 
$M=v(\mathfrak m)$ and assume that $R$ is of maximal embedding dimension.
 The following conditions are equivalent:
\begin{itemize}
    \item[(i)] $(R,\mathfrak m)$ is almost Gorenstein;
    \item[(ii)] $M-\boldsymbol{e}$ is a good symmetric semigroup;
    \item[(iii)] $S$ is almost symmetric.
\end{itemize}
\end{prop}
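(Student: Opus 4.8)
The plan is to let the maximal embedding dimension hypothesis collapse $v(\mathfrak m:\mathfrak m)$, $M-M$ and $M-\boldsymbol e$ into a single ideal, after which (i)--(iii) are linked purely by the semigroup results already proved. I would begin by recording two inclusions valid for every ring in our class and every local good semigroup: first $v(\mathfrak m:\mathfrak m)\subseteq M-M$, which is noted in the text; second $M-M\subseteq M-\boldsymbol e$, which is immediate since $\boldsymbol e\in M$ forces $\al+M\subseteq M\Rightarrow \al+\boldsymbol e\in M$.

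Next I would translate the hypothesis. Since $R$ is of maximal embedding dimension we have $\mathfrak m:\mathfrak m=x^{-1}\mathfrak m$ for a minimal reduction $x$ of $\mathfrak m$; passing to values, $v(x)=\boldsymbol e$ and, since each component of $v$ is a valuation, $v(x^{-1}\mathfrak m)=v(\mathfrak m)-v(x)$, so that $v(\mathfrak m:\mathfrak m)=M-\boldsymbol e$. Feeding this into the inclusions of the previous paragraph gives
$$M-\boldsymbol e=v(\mathfrak m:\mathfrak m)\subseteq M-M\subseteq M-\boldsymbol e,$$
whence $v(\mathfrak m:\mathfrak m)=M-M=M-\boldsymbol e$. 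In particular $S$ is automatically of maximal embedding dimension at the semigroup level and $M-M$ is a good semigroup. This translation, and especially the identity $v(\mathfrak m:\mathfrak m)=M-\boldsymbol e$, is the one genuinely non-formal step and the place I expect the main difficulty: one must justify $v(x)=\boldsymbol e$ for the minimal reduction and that taking values commutes with division by the principal element $x$.

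With the triple equality in hand the equivalences are formal. For (i)$\Leftrightarrow$(iii) I would invoke the reformulation stated just before Corollary \ref{bdf}, namely that $(R,\mathfrak m)$ is almost Gorenstein if and only if $S=v(R)$ is almost symmetric and $v(\mathfrak m:\mathfrak m)=M-M$; since the second condition now holds automatically, this reduces to ``$R$ almost Gorenstein $\iff$ $S$ almost symmetric''. For (ii)$\Leftrightarrow$(iii) I would apply Corollary \ref{alm-symm-good}, which states that $S$ is almost symmetric of maximal embedding dimension if and only if $M-M$ is a symmetric good semigroup; as $S$ is always of maximal embedding dimension here, this reads ``$S$ almost symmetric $\iff$ $M-M$ symmetric good'', and replacing $M-M$ by $M-\boldsymbol e$ yields exactly (ii). Once the hypothesis has been converted into $v(\mathfrak m:\mathfrak m)=M-M=M-\boldsymbol e$, nothing further is needed beyond these two citations.
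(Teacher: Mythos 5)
Your proof is correct, and its skeleton overlaps with the paper's in one half while diverging in the other. The shared part is the key translation: under the maximal embedding dimension hypothesis, $v(\mathfrak m:\mathfrak m)=v(\mathfrak m x^{-1})=M-\boldsymbol e$, which combined with the general inclusions $v(\mathfrak m:\mathfrak m)\subseteq M-M\subseteq M-\boldsymbol e$ collapses everything to a single ideal; the paper does exactly this (in its proof of $(ii)\Leftrightarrow(iii)$), and like you it then invokes Corollary~\ref{alm-symm-good} to identify $(ii)$ with $(iii)$. Where you differ is in how the ring condition $(i)$ is attached: the paper links $(i)$ directly to $(ii)$ via \cite[Proposition 25]{BF} (``$R$ almost Gorenstein of maximal embedding dimension iff $\mathfrak m:\mathfrak m$ is Gorenstein'') together with the theorem that a ring in this class is Gorenstein iff its value semigroup is symmetric, whereas you link $(i)$ to $(iii)$ using the restatement of \cite[Proposition 3.7]{a-u} recalled just before Corollary~\ref{bdf} (``$R$ almost Gorenstein iff $S$ almost symmetric and $v(\mathfrak m:\mathfrak m)=M-M$''), whose second conjunct becomes automatic after the collapse. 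Your route is more uniform --- once the collapse is established, everything is settled at the semigroup level by two citations, with no appeal to \cite{BF} or to the Gorenstein/symmetric correspondence --- while the paper's route buys the conceptually sharper ring-theoretic reading of $(i)\Leftrightarrow(ii)$ as ``almost Gorenstein iff $\mathfrak m:\mathfrak m$ is Gorenstein.'' Note also that both arguments rest on the same genuinely non-formal facts you flagged: that a minimal reduction $x$ of $\mathfrak m$ has value $\boldsymbol e$, and that $v(\mathfrak m x^{-1})=v(\mathfrak m)-v(x)$; the paper asserts these without further proof, so your treatment is no less complete on this point.
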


\begin{proof}
$ $
\begin{description}
\item[$(i) \Leftrightarrow (ii)$] 
It is well known that $R$ is of maximal embedding dimension if and only if
$\mathfrak m:\mathfrak m = \mathfrak m x^{-1}$, where $x \in \mathfrak m$ is a minimal reduction of $\mathfrak m$, i.e., in 
our context, an element of minimal non-zero value (that is $\boldsymbol e$). Hence $v(\mathfrak m x^{-1})=M-\boldsymbol e$. Moreover, the equality $\mathfrak m:\mathfrak m = \mathfrak m x^{-1}$ holds if and only if $\mathfrak m x^{-1}$ is a ring.

Applying \cite[Proposition 25]{BF}, we get that $R$ is almost Gorenstein if and only if 
$\mathfrak m:\mathfrak m = \mathfrak m x^{-1}$ is a Gorenstein ring, that implies immediately that 
$M-\boldsymbol e=v(\mathfrak m x^{-1})$ is a good symmetric semigroup.

Conversely, if $M-\boldsymbol e$ is a good symmetric semigroup, since by hypothesis 
$\mathfrak m x^{-1}=\mathfrak m:\mathfrak m$ is a ring, it has to be Gorenstein, having a symmetric value semigroup. The thesis follows again applying \cite[Proposition 25]{BF}.

\item[$(ii) \Leftrightarrow (iii)$]  We need only to observe that $v(\mathfrak m:\mathfrak m)
\subseteq M-M \subseteq M-\boldsymbol e =v(\mathfrak m x^{-1})$ and so
the hypothesis that $R$ is of maximal embedding dimension implies 
that $\subseteq M-M \subseteq M-\boldsymbol e$, i.e. $S$ is of maximal embedding dimension. 
Now the thesis follows by Corollary \ref{alm-symm-good}
\end{description}
\end{proof}

\begin{ex}
The semigroup considered in Example \ref{2.7} is the value semigroup of 
the ring $R=k[[x,y,z,w]]/P_1\cap P_2$, where $k$ is any field of with at least $3$ elements, $P_1=(x^3-y^2, z-x^2, w-xy)$ and $P_2=(x^3-y^2,z,w)$. This fact can
be seen by considering the homomorphism $k[[x,y,z,w]]\rightarrow k[[t]]\times k[[u]]$
induced by $x \mapsto (t^2, u^2)$, $y\mapsto (t^3,u^3)$, $z\mapsto (t^4,0)$ and $w \mapsto
(t^5,0)$, whose kernel is $P_1 \cap P_2$. The ring $R$ is of maximal embedding dimension, since also its multiplicity is $4$ (i.e. the sum of the components of the minimal non-zero value), and
as explained in Example \ref{2.7}, $M-\boldsymbol e$ is a symmetric good semigroup.
Hence $R$ is almost Gorenstein.
\end{ex}

In light of the above discussion on the semilocal case, we can easily generalize the previous results.

\begin{cor}\label{rings} Let $R$ be a one-dimensional, noetherian, analytically unramified, residually rational, reduced 
semilocal ring. Set $S=v(R)$ and let $J=v(J(R))$ be its Jacobson ideal. The following conditions are equivalent:
\begin{itemize}
    \item[(i)] $R$ is almost Gorenstein;
    \item[(ii)] $J(R)$ is a canonical ideal of $J(R):J(R)$;
    \item[(iii)] $S$ is  almost symmetric and $v(J(R):J(R))=J-J$;
    \item[(iv)] $J$ is a canonical ideal of $J-J$ and $v(J(R):J(R))=J-J$.
\end{itemize}
\end{cor}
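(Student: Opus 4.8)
The plan is to reduce each of the four conditions to the local factors $R_{\mathfrak m_i}$ and then quote the results already available for local rings and local good semigroups. Throughout I would use the decompositions $R\cong R_{\mathfrak m_1}\times\cdots\times R_{\mathfrak m_j}$, $S=v(R)=S_1\times\cdots\times S_j$ with $S_i=v(R_{\mathfrak m_i})$, $J=v(J(R))=M_1\times\cdots\times M_j$, and the decomposition $J-J=(M_1-M_1)\times\cdots\times(M_j-M_j)$ established in Section~1. Two product facts drive the reduction: first, in the total ring of fractions $J(R):J(R)=\prod_i(\mathfrak m_iR_{\mathfrak m_i}:\mathfrak m_iR_{\mathfrak m_i})$, whence $v(J(R):J(R))=\prod_i v(\mathfrak m_iR_{\mathfrak m_i}:\mathfrak m_iR_{\mathfrak m_i})$; second, since the canonical module of a finite product of one-dimensional Cohen-Macaulay local rings is the product of their canonical modules, and since $K(\prod_i T_i)=\prod_i K(T_i)$ for good semigroups, a product ideal of such a product (resp. a product relative ideal of a product of good semigroups) is a canonical ideal if and only if each of its factors is.

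I would establish $(i)\Leftrightarrow(ii)$ by applying the quoted \cite[Proposition 3.2]{DS2} to each $R_{\mathfrak m_i}$, which is one-dimensional, Cohen-Macaulay and admits a canonical ideal: thus $R_{\mathfrak m_i}$ is almost Gorenstein if and only if $\mathfrak m_iR_{\mathfrak m_i}$ is a canonical ideal of $\mathfrak m_iR_{\mathfrak m_i}:\mathfrak m_iR_{\mathfrak m_i}$. Since $R$ is almost Gorenstein by definition exactly when every $R_{\mathfrak m_i}$ is, the second product fact immediately upgrades this to the assertion that $J(R)$ is a canonical ideal of $J(R):J(R)$. For $(i)\Leftrightarrow(iii)$ I would instead use the local reformulation recalled before Corollary~\ref{bdf}, namely that $(R_{\mathfrak m_i},\mathfrak m_iR_{\mathfrak m_i})$ is almost Gorenstein if and only if $S_i$ is almost symmetric and $v(\mathfrak m_iR_{\mathfrak m_i}:\mathfrak m_iR_{\mathfrak m_i})=M_i-M_i$. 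Because $S$ is almost symmetric precisely when every $S_i$ is, and because $v(J(R):J(R))=J-J$ holds precisely when $v(\mathfrak m_iR_{\mathfrak m_i}:\mathfrak m_iR_{\mathfrak m_i})=M_i-M_i$ for every $i$, taking products over $i$ yields the equivalence.

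The remaining equivalence $(iii)\Leftrightarrow(iv)$ is purely combinatorial, and since both conditions contain the clause $v(J(R):J(R))=J-J$ it reduces to showing that, whenever $J-J$ is a good semigroup, $S$ is almost symmetric if and only if $J$ is a canonical ideal of $J-J$. By Theorem~\ref{teorema centrale 3}, $S$ is almost symmetric exactly when $J-J$ is good and $J-\boldsymbol{e}$ is the standard canonical ideal of $J-J$; as $J-\boldsymbol{e}$ is nothing but the translate of $J$ by $-\boldsymbol{e}$, the forward direction is immediate, $J=(J-\boldsymbol{e})+\boldsymbol{e}$ being a translate of a standard canonical ideal and hence a canonical ideal.

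The converse is the one delicate point of the argument, and I expect it to be the main obstacle: knowing merely that $J$ is a canonical ideal determines $J-\boldsymbol{e}$ only up to translation, so before I may apply Theorem~\ref{teorema centrale 3} I must pin the translate down to be exactly the standard canonical ideal. I would argue precisely as in the proof of Theorem~\ref{main}: from $M_i-M_i\subseteq M_i-\boldsymbol{e_i}\subseteq\N^{h_i}$ (established inside Proposition~\ref{miracolo}) one obtains, by taking products, $J-J\subseteq J-\boldsymbol{e}\subseteq\N^h$, and a canonical ideal squeezed between a good semigroup and $\N^h$ must coincide with its standard canonical ideal. This forces $J-\boldsymbol{e}=K(J-J)$, whereupon Theorem~\ref{teorema centrale 3} gives that $S$ is almost symmetric. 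All other steps are routine transfers across the direct-product decomposition.
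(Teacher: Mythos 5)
Your proposal is correct and follows essentially the same route as the paper: reduce all four conditions to the local factors via the product decompositions, apply \cite[Proposition 3.2]{DS2} for $(i)\Leftrightarrow(ii)$, the characterization from \cite{a-u} for $(i)\Leftrightarrow(iii)$, and Theorem~\ref{teorema centrale 3} for $(iii)\Leftrightarrow(iv)$. The only difference is that you make explicit the step the paper leaves implicit in $(iv)\Rightarrow(iii)$ --- pinning down the translate of the canonical ideal $J$ to conclude $J-\boldsymbol{e}=K(J-J)$ via the squeeze $J-J\subseteq J-\boldsymbol{e}\subseteq\N^h$ --- which is a genuine (and correct) clarification rather than a different argument.
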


\begin{proof} $ $
\begin{description}
\item[$(i) \Leftrightarrow (ii)$]  By definition $R$ is almost Gorenstein if and only if $R_{\mathfrak m_i}$  is almost Gorenstein for every $i=1, \dots, j$. By \cite[Proposition 3.2]{DS2} this is equivalent to saying that 
$\mathfrak m_i R_{\mathfrak m_i}$ is a canonical ideal of $({\mathfrak m_iR_{\mathfrak m_i}}:{\mathfrak m_iR_{\mathfrak m_i}})$. Since $J(R)=\mathfrak m_1R_{\mathfrak m_1}\times \dots \times \mathfrak m_jR_{\mathfrak m_j}$ and $J(R):J(R)= ({\mathfrak m_1R_{\mathfrak m_1}}:{\mathfrak m_1R_{\mathfrak m_1}})
\times \dots \times ({\mathfrak m_jR_{\mathfrak m_j}}:{\mathfrak m_jR_{\mathfrak m_j}})$,
the thesis follows immediately.
\item[$(iii) \Leftrightarrow (iv)$] In both conditions it is assumed that $v(J(R):J(R))=J-J$, hence 
$J-J$ is good and the equivalence follows applying Theorem \ref{teorema centrale 3}.
\item[$(i) \Leftrightarrow (iii)$] By \cite{a-u} we know that, for every $i=1, \dots, j$, 
the ring $R_{\mathfrak m_i}$ is almost Gorenstein if and only if $S_i$ 
is almost symmetric and $v(\mathfrak m_iR_{\mathfrak m_i}: \mathfrak m_iR_{\mathfrak m_i})=M_i-M_i$.
The thesis is now straightforward.
\end{description}
\end{proof}

Like in the local case, the previous result is not very useful from a computational point of view, but we can state  a semilocal version of
Proposition \ref{bella}.

\begin{cor}\label{final}
We preserve the above notations. Let $R$ be a one-dimensional, noetherian, analytically unramified, residually rational, reduced 
semilocal ring; identify $R$ with $R_{\mathfrak m_1} \times \dots \times R_{\mathfrak m_j}$ and set $x=(x_1, \dots, x_j)$, where $x_i$ is a minimal reduction of
$\mathfrak m_iR_{\mathfrak m_i}$ (for any $i=1,\dots ,j$); set $S=v(R)$ and let $J=v(J(R))$ be its Jacobson ideal. Assume that $R_{\mathfrak m_i}$
is of maximal embedding dimension for every $i=1, \dots , j$ (or, equivalently, that $J(R)x^{-1}=J(R):J(R)$).
    Then the following conditions are equivalent:
\begin{itemize}
    \item[(i)] $R$ is almost Gorenstein;
    \item[(ii)] $J-\boldsymbol e$ is a good symmetric semigroup
    (where $\boldsymbol e =(\boldsymbol{e_1} , \dots , \boldsymbol{e_r})$ and $\boldsymbol e_i$ is the multiplicity vector of $S_i=v(R_{\mathfrak m_i}$).
\end{itemize}
\end{cor}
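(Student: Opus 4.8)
The plan is to deduce this semilocal statement from its local counterpart, Proposition \ref{bella}, by exploiting the product decomposition of $R$ and of its value semigroup. Recall that $R\cong R_{\mathfrak m_1}\times\dots\times R_{\mathfrak m_j}$ and correspondingly $S=v(R)=S_1\times\dots\times S_j$ with $S_i=v(R_{\mathfrak m_i})$ local, $M_i$ the maximal ideal of $S_i$, and $J=M_1\times\dots\times M_j$. By definition, $R$ is almost Gorenstein if and only if each $R_{\mathfrak m_i}$ is almost Gorenstein, so the whole problem is to translate the right-hand side (ii) into a componentwise symmetry condition.

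First I would record what the maximal embedding dimension hypothesis gives at the semigroup level. The assumption that $R_{\mathfrak m_i}$ is of maximal embedding dimension for every $i$, equivalently $J(R)x^{-1}=J(R):J(R)$, means at the value-semigroup level that $M_i-M_i=M_i-\boldsymbol{e_i}$ for each $i$; hence every $S_i$ is a local good semigroup of maximal embedding dimension, and in particular each $M_i-\boldsymbol{e_i}$ is a good semigroup. This is exactly the hypothesis under which Proposition \ref{bella} applies to each local ring $R_{\mathfrak m_i}$, yielding: $R_{\mathfrak m_i}$ is almost Gorenstein if and only if $M_i-\boldsymbol{e_i}$ is a good symmetric semigroup.

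The remaining task is to assemble these local statements into statement (ii). I would use that $J=M_1\times\dots\times M_j$ and $\boldsymbol e=(\boldsymbol{e_1},\dots,\boldsymbol{e_j})$ split compatibly with the supports of the $S_i$, so that the product rule for differences of relative ideals recalled before Proposition \ref{miracolo2} gives
$$J-\boldsymbol e=(M_1-\boldsymbol{e_1})\times\dots\times(M_j-\boldsymbol{e_j}).$$
Since each factor is good by the previous paragraph, $J-\boldsymbol e$ is a good semigroup. Finally, invoking the characterization of symmetry in the non-local case (a good semigroup is symmetric if and only if every one of its components is symmetric, as recalled in Section 1), $J-\boldsymbol e$ is a good symmetric semigroup if and only if each $M_i-\boldsymbol{e_i}$ is. Chaining the equivalences gives $(i)\Leftrightarrow$ each $R_{\mathfrak m_i}$ almost Gorenstein $\Leftrightarrow$ each $M_i-\boldsymbol{e_i}$ good symmetric $\Leftrightarrow$ $J-\boldsymbol e$ good symmetric $\Leftrightarrow(ii)$.

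The only genuinely delicate point, and the step I would check most carefully, is the product decomposition $J-\boldsymbol e=\prod_i(M_i-\boldsymbol{e_i})$: one must make sure that $\boldsymbol e$ really splits as $(\boldsymbol{e_1},\dots,\boldsymbol{e_j})$ along the supports and that the hypotheses of the product rule (matching supports of the factors of $J$ and of $\boldsymbol e$ with those of the $S_i$) are met, so that the difference is computed coordinate-block by coordinate-block. Everything else is a formal reduction to Proposition \ref{bella} together with the componentwise nature of symmetry, so no new combinatorial difficulty arises beyond what is already established in the local case.
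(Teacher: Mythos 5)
Your proof is correct and is essentially the paper's own (implicit) argument: the paper states this corollary without a separate proof, precisely because it is the componentwise reduction to Proposition~\ref{bella} that you carry out, using $J=M_1\times\dots\times M_j$, $\boldsymbol{e}=(\boldsymbol{e_1},\dots,\boldsymbol{e_j})$, the product rule $J-\boldsymbol{e}=(M_1-\boldsymbol{e_1})\times\dots\times(M_j-\boldsymbol{e_j})$, and the fact that symmetry of a good semigroup is equivalent to symmetry of its components. Your verification that the ring-level maximal embedding dimension hypothesis forces $M_i-M_i=M_i-\boldsymbol{e_i}$ is the same observation the paper makes in the proof of Proposition~\ref{bella}, so nothing is missing.
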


\noindent {\bf Statements and Declarations.}
On behalf of all authors, the corresponding author states that there is no conflict of interest.

\end{document}